\newtheorem{theorem}{Theorem}[section]
\newtheorem{proposition}[theorem]{Proposition}
\newtheorem{cor}[theorem]{Corollary}
\newtheorem{lemma}[theorem]{Lemma}
\newtheorem*{claim}{Claim}
\theoremstyle{definition}
\newtheorem{definition}[theorem]{Definition}
\theoremstyle{remark}
\newtheorem{remark}[theorem]{Remark}
\numberwithin{equation}{section}
\newcommand{\Iff}{\mbox{$\Longleftrightarrow$}}
\def\proof{{\bf Proof.}\ }
\def\ie{\emph{i.e.}}
\def\Ie{\emph{I.e.}}
\def\pes{\emph{e.g.}}
\def\Pes{\emph{E.g.}}
\def\wlg{without loss of generality}
\def\F{{\mathcal F}}
\def\P{{\mathcal P}}
\def\RR{{\mathcal{R}}}
\def\RP{{\mathcal{R}^{+}}}
\def\U{{\mathcal U}}
\def\Ab{{\mathbf{A}}}
\def\N{{\mathbb N}}
\def\R{{\,\mathbb R}}
\def\Z{{\mathbb Z}}
\def\WW{{\mathbb{W}}}
\def\TT{{\mathbb{T}}}
\def\WW{{\mathbb{W}}}
\def\II{{\mathbb{I}}}
\def\LL{{\mathbb{L}}}
\def\XX{{\mathbb{X}}}
\def\fg{{\varphi}}
\def\og{{\omega}}
\def\sg{{\sigma}}
\def\Sg{{\Sigma}}
\def\min{\mbox{\rm min}\;}
\def\max{\mbox{\rm max}\;}
\def\+#1{\vec{#1}}
\def\lA{{\mathbf{A}}}
\def\ib{{\mathbf{i}}}
\def\xb{{\mathbf{x}}}
\def\aa{{\mathbf{a}}}
\def\bb{{\mathbf{b}}}
\def\nk{{\mathfrak{n}}}
\def\Ik{{\mathfrak{I}}}
\def\Jk{{\mathfrak{J}}}
\def\ng{{\mathfrak{n}}}
\def\Ng{{\mathfrak{N}}}
\def\Nk{{\mathfrak{N}}}
\def\Rg{{\mathfrak{R}}}
\def\*{\times}
\def\0{\emptyset}
\def\7{\setminus}
\def\_{\overline}
\def\eq{\approx}
\def\<{\prec}
\def\ult#1#2{^{#1}_{\; #2}}
\def\incl{\subseteq}
\def\pincl{\subset}
\def\la{\langle}
\def\ra{\rangle}
\def\equ{\approx_{\U}}
\def\zfc{\textsf{ZFC}}
\def\qed{\hfill $\Box$}
\def\SP{$\mathsf{SP} $}
\def\DP{$\mathsf{DP} $}
\def\PP{$\mathsf{PP} $}
\def\AP{$\mathsf{AP} $}
\def\ZP{$\mathsf{ZP} $}
\def\TP{$\mathsf{TP} $}
\def\UP{$\mathsf{UP} $}
\def\NP{$\mathsf{NP} $}
\def\FAP{$\mathsf{FAP} $}
\def\diser{$(*)$}
\begin{document}

\title{Natural Numerosities of sets of tuples}


\author{Marco Forti}
\address{Dipart. di Matematica Applicata ``U. Dini'' - Via Buonarroti 
1C - 56100 PISA (Italy)}
\curraddr{}
\email{forti@dma.unipi.it}
\thanks{Research
partially supported  by  MIUR Grant PRIN 2009, Italy.}

\author{Giuseppe Morana Roccasalvo}
\address{}
\curraddr{}
\email{moranaroccasalvo@mail.dm.unipi.it}
\thanks{}

\subjclass[2010]{Primary 03E65,  03F25; secondary 03A05, 03C20.}

\date{}

\dedicatory{}


\begin{abstract}
    
We consider  a notion of ``numerosity''
for sets of tuples of natural numbers,
that satisfies the \emph{five common notions} of Euclid's Elements, 
so it can agree
with cardinality only for finite sets.
By suitably axiomatizing such a notion, we show that, contrasting to 
cardinal arithmetic,
the natural ``Cantorian'' definitions of  order relation and 
arithmetical operations  provide a very good algebraic
structure. In fact, numerosities can be taken as the non-negative part
of a \emph{discretely ordered ring}, namely the quotient of a formal power 
series ring modulo a suitable (``gauge'') ideal. In particular, special 
 numerosities, called ``natural'', can be identified with the semiring of
 hypernatural numbers  of appropriate ultrapowers of $\N$. 
    
\end{abstract}

\maketitle
%
 \section*{Introduction}

 In this paper we consider a notion of ``equinumerosity'' on 
  sets of tuples
 of natural numbers, \ie\ 
  an equivalence relation, finer than equicardinality,
 that satisfies the celebrated five Euclidean common notions about 
 \emph{magnitudines} (\cite{eu}), including the
principle that ``the whole is greater than the part''. 
  This notion preserves the basic properties of
 equipotency for \emph{finite} sets. In particular,  the natural
 Cantorian definitions endow 
 the corresponding numerosities with a structure of  
 \emph{discretely ordered semiring}, where $0$ is the size of the emptyset,
 $1$ is the size of every singleton,
and greater numerosity corresponds to supersets.

 This idea of ``Euclidean'' numerosity 
 has been recently investigated by V.~Benci, M.~Di Nasso and M.~Forti
 in a sequel of papers, starting with \cite{bd} (see also 
 \cite{8fold}). 
In particular, in \cite{bdf}
 the size of arbitrary sets (of ordinals)
was approximated by means of directed unions of finite sets, while in 
\cite{df} finite dimensional point sets over the real line, and in 
\cite{logan} entire ``mathematical universes'' are 
considered.

 Here we focus on \emph{sets of tuples
 of natural numbers}, and we show that the existence
 of equinumerosity relations for such sets
 is equivalent to the existence of a  class
 of \emph{prime ideals}, named ``gauge'', of a special ring of \emph{formal power 
 series in countably many indeterminates}. 
Similarly to all papers quoted above, special 
ultrafilters are used in order to model numerosities.
 In fact, the gauge ideals corresponding to the equinumerosities called 
``natural'', are in biunique correspondence with \emph{special ultrafilters} 
over the set $\N^{<\og}$ of all finite subsets of $\N$. These
ultrafilters, also called ``gauge'', may be
 of independent interest, being not clear their connection with
 other classes
 of special ultrafilters  considered in the
 literature. In fact, we only prove here that all selective 
 ultrafilters are gauge. 

 \medskip
 The paper is organized as follows. In Section \ref{uno} we derive 
 from the Euclidean common notions a few set theoretic
  principles that are the basis of our formal definition of \emph{equinumerosity
 relation}. Then we prove that the corresponding \emph{numerosities} 
 form an ordered semiring that can be 
 obtained as the 
 non-negative part of the quotient of a ring $\RR$ of formal power 
 series in countably many indeterminates, modulo suitable prime ideals.
 In Section \ref{ult} we show how  numerosities can be embedded in 
 rings of \emph{hyperreal numbers} obtained by means of special ultrafilters.
 In particular, all ``natural'' numerosities are essentially 
 \emph{nonstandard natural numbers}. Actually, they arise through 
 isomorphisms with ultrapowers $\N\ult{\N^{<\og}}{\U}$ modulo ``gauge 
 ultrafilters''.
 A few final remarks and open questions are contained in Section 
 \ref{froq}.
 
\medskip 
 In general, we refer to
  \cite{ck} for definitions and facts
concerning ultrapowers, ultrafilters, and nonstandard models that are 
used in this paper.

\medskip
 The authors are grateful to Andreas Blass and Mauro Di Nasso for useful 
 discussions.

\section{Equinumerosity of point sets}\label{uno}

In this section we study a notion of ``numerosity''
for \emph{point sets of natural numbers},
\emph{i.e.} for sets of tuples of natural numbers.
This numerosity will be defined by starting from
an equivalence relation of ``equinumerosity'', denoted by $\approx$, that
satisfies the basic properties of \emph{equipotency}
between finite sets.

In particular we assume first that equinumerosity satisfies the 
following\footnote{~
We call ``Aristotelian'' this principle, because the favourite 
example of ``axiom'' quoted by Aristotle is ``if equals be subtracted from equals, the
remainders are equal''.}
\begin{description}
    
\item[\AP\ (Aristotelian Principle)]
$A\approx B$ if and only if $A\setminus B\approx B\setminus A$.
\end{description}
The axiom \AP\ is  a compact
equivalent set theoretic formulation of the second and third common notions 
of Euclid's Elements:
\emph{``If equals be added to equals,
the wholes are equal''}, and
\emph{``if equals be subtracted from equals, the
remainders are equal''}. (A precise statement of this equivalence is 
given in Proposition \ref{sumdif} below.)
On the other hand, for infinite sets, the
Aristotelian Principle is clearly incompatible with the Cantorian 
notion of equicardinality.

Notice that the first common notion \emph{``things which are equal to the same 
thing are also equal to one another''} is already secured by the assumption 
that equinumerosity is an equivalence relation.

 Together with the notion of ``having the same numerosity'', it is 
 natural to introduce a ``comparison of numerosities'', so as to 
 satisfy the fifth Euclidean common notion 
 \emph{``the whole is greater than the 
part''}. Of course, this comparison must be coherent with 
equinumerosity, so we are led to the following 

{\bf Definition}
    We say that $A$ is \emph{greater than} $B$, denoted by $A\succ 
    B$, or equivalently that $B$ is \emph{smaller than} $A$, denoted by 
    $B\prec A$, if 
    there exist $A', B'$ such that $A'\supset B'$, $A\eq A'$, and $B\eq 
    B'$.

 The  natural
idea that numerosities of sets are always comparable, combined with 
the fifth Euclidean notion, gives the following trichotomy property:

\begin{description}
\item[\ZP\ (Zermelian Principle)]
Exactly one of the following three conditions holds:
\begin{enumerate}
\item[(a)]
$A\approx B$\,;
\item[(b)]
$A\succ B$;
\item[(c)]
$A\prec B$.
\end{enumerate}
\end{description}

We shall see below that the Zermelian Principle implies that given two sets 
one is equinumerous to some 
superset of the other, and obviously 
that  no proper subset is equinumerous to the  
set itself.(See Proposition \ref{preorder}).

We stress that both properties \AP\ and \ZP\ hold
 for equipotency between finite sets, 
while both  fail badly for equipotency between infinite sets.
So we cannot assume that equipotent sets are always equinumerous, but 
we have to give a suitable interpretation of the fourth Euclidean 
common notion 
\emph{``things applying [exactly] onto one 
 another are equal to one another''}.\footnote{~
 See the accurate discussion of this Euclidean common notion by T. 
 Heath in \cite{eu}.}
 We choose two kinds of numerosity preserving bijections, namely 
 ``permutations of components'' and ``rising dimension''
\begin{description}
\item[\TP\ (Transformation Principle)]
\emph{If $T$ is $1$-$1$, and $T(a)=(a_{\tau 1},\ldots,a_{\tau k})$ is a 
permutation of $a=(a_1,\ldots,a_k)$ for all $a\in A$, 
then $A\approx T[A]$.}
\medskip
\item[\UP\ (Unit Principle)]
$A\* \{n\}\approx A$ for all $n\in\N$.
\end{description}

Remark that the Unit Principle cannot be consistently assumed for all 
point sets $A$. In fact, if $A=\{ n, (n,n), (n,n,n), \ldots \}$, then 
$A\*\{n\}$ is a \emph{proper subset} of $A$, and so it cannot be 
equinumerous to $A$. So we have to restrict the principle \UP. In 
view of the further developements, in particular in order to obtain a 
\emph{semiring} of numerosities, a
convenient choice is the family of all ``finitary'' point sets, 
\ie\ sets that contain only finitely many tuples for each finite 
set of components. 
Denote by $$\WW=\{A\incl\P(\bigcup_{k\in\N}\N^{k})\mid \forall n\exists 
h \forall k>h (A\cap\{0,\ldots, n\}^{k}=\0)\}$$ the family of all \emph{finitary point sets}.
Remark that $\WW$ is a proper superset of the family of all ``finite 
dimensional'' point sets
$\WW_{0}=\bigcup_{d\in\N}\P(\bigcup_{k=1}^{d}\N^{k})$ that has been 
considered in \cite{QSU,tesi}.

Finally, in order to define a product of numerosities, we could introduce 
the following principle 
\begin{description}
\item[\PP\ (Product Principle)]
\emph{If  $A\approx A'$ and $B\approx B'$ then $A\times B\approx A'\times B'$.}
\end{description}

In order to make $\WW$ closed under Cartesian products, we follow the usual practice,
and we identify Cartesian products with the corresponding
``concatenations''. That is, for every $A\subseteq\N^k$
and for every $B\subseteq\N^h$, we identify
$$A\times B=\{((a_1,\ldots,a_k),(b_1,\ldots,b_h))
\mid (a_1,\ldots,a_k)\in A, (b_1,\ldots,b_h)\in B\}$$
with:
$$A\times B=
\{(a_1,\ldots,a_k,b_1,\ldots,b_h)\mid
(a_1,\ldots,a_k)\in A\ \text{and}\ (b_1,\ldots,b_h)\in B\}.$$

With this convention, we have that 
$A\*\{x_{1},x_{2},\ldots,x_{k}\}=A\*\{x_{1}\}\*\{x_{2}\}\*\ldots\*\{x_{k}\}$, 
and so, using also the Transformation 
Principle, we obtain the general property
$$\{P\}\* A\eq A\*\{P\}\eq A\ \mbox{for any point}\ P\in\N^{k}.$$
In particular, any two singletons are equinumerous.

However, assuming this convention, the Product Principle cannot be 
consistently postulated in the above formulation for all 
sets in $\WW$, because different pairs of tuples may share the same 
concatenation.
\Pes\ both $((1,2),(3,4,5))$ and $((1,2,3),(4,5))$ produce 
$(1,2,3,4,5)$. So one should consider concatenated products as 
``multisets'', where each tuple comes with its (finite) 
``multiplicity''. We prefer to consider only pure sets, so we 
restrict the Product Principle to suitably defined ``multipliable pairs''.

Let us call the sets $A,B\in\WW$ \emph{multipliable} if different 
pairs $(a,b)\in A\* B$ have different concatenations. (For instance, 
every set of $\WW_{0}$ is multipliable with every subset of  
 $\N^{k}$.) We 
shall restrict \PP\ to products of \emph{multipliable} sets.
We can now give our precise definition of equinumerosity relation.

\begin{definition}\label{eqn}
Let $$\WW=\{A\incl\P(\bigcup_{k\in\N}\N^{k})\mid \forall n\exists 
h \forall k>h\, (A\cap\{0,\ldots, n\}^{k}=\0)\}$$ be the 
set of all \emph{finitary point sets}.

\noindent
$\bullet$ An equivalence relation 
$\approx$ on $\WW$ is an \emph{equinumerosity} 
if  the following properties are fullfilled
for all  $A, B\in\WW$:

\begin{description}
\item[\AP]
$A\approx B$ if and only if $A\setminus B\approx B\setminus A$.
\item[\ZP]
Exactly one of the following three conditions holds:
$$ \mbox{either}\ A\approx B, \ \ \mbox{or}\
 A\succ B,\ \ \mbox{or}\
 A\prec B,$$
where $A$ is \emph{greater than} $B$, denoted by $A\succ 
    B$, or equivalently  $B$ is \emph{smaller than} $A$, denoted by 
    $B\prec A$, if 
    there exist $A', B'$ such that 
    \begin{center}
        $A'\supset B'$, $A\eq A'$, and $B\eq  B'$.
    \end{center}
\item[\TP]
If $T$ is $1$-$1$, and $T(a)$ is a permutation of $a$ for all $a\in A$ 
then $A\approx T[A]$.
\item[\UP]
$A\* \{n\}\approx A$ for all $n\in\N$.
\item[\PP]
If $A,B$ and $A',B'$ are multipliable pairs and $A\approx A'$, $B\approx B'$ 
then 
\begin{center}
    $A\times B\approx A'\times B'$.
\end{center}

\end{description}

\end{definition}

\begin{definition}\label{num}
{Let $\eq$ be an equinumerosity relation on the set $\WW$.}

\noindent
    $\bullet$ {The \emph{numerosity} of $A$ (w.r.t. $\eq$) is 
    the equivalence class 
$[A]_{\eq}=\{B\in\WW \mid B\approx A\}$ of all point sets
equinumerous to $A$, denoted by 
$\nk_{\approx}(A)$. 
}
   
\noindent
    $\bullet$   {The  \emph{set of numerosities} of 
$\eq$ is the quotient set 
$\Ng_{\eq}=\WW/\eq$,
and}

\noindent
    $\bullet$  {the \emph{numerosity function} 
associated to $\approx$ is
the canonical map $\ng_{\eq}:\WW\to\Nk_{\eq}$.}

{We drop the 
subscript $\eq$ whenever the equinumerosity relation is fixed.}

\end{definition}

\smallskip
Clearly the Unit Principle formalizes the natural
idea that singletons have ``unitary'' numerosity.
A trivial but important consequence of this 
axiom is the existence of infinitely many 
pairwise disjoint equinumerous copies of any point set. Moreover, 
infinitely many of them can be taken multipliable with any fixed set 
of $\WW$. Namely
\begin{proposition}\label{copy}
Let $A,B\in\WW$ be  point sets. For $m,n,h,k\in\N$ put 
$$A(m^{h},n^{k})=A\*\{m\}^{h}\*\{n\}^{k}. $$
Assuming \UP\, the sets $A(m^{h},n^{k})$ are equinumerous to $A$ for all $h,k$, and are 
pairwise disjoint, disjoint from $B$, and multipliable with $B$ for 
all sufficiently large $h,k$.
\end{proposition}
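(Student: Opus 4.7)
The first part, $A(m^{h}, n^{k}) \approx A$ for all $h, k$, is proved by induction on $h+k$. The base case $h=k=0$ is trivial (both sides are $A$). For the inductive step, note that $A(m^{h}, n^{k}) \in \WW$ (finitariness is preserved under multiplication by a singleton), so \UP\ applies and gives $A(m^{h+1}, n^{k}) = A(m^{h}, n^{k}) \times \{m\} \approx A(m^{h}, n^{k})$; by the inductive hypothesis and transitivity of $\approx$, we get $A(m^{h+1}, n^{k}) \approx A$. The case of incrementing $k$ is symmetric.

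For the three remaining combinatorial properties I would fix $m \neq n$ and exploit the finitariness of $A$ and $B$: by the defining property of $\WW$, there is a dimension bound $D \in \N$ such that every tuple in $A \cup B$ all of whose entries are $\leq \max(m, n)$ has dimension at most $D$. The plan is, for $(h, k)$ ranging over an appropriate cofinal family with $h, k > D$, to verify the claims by parsing the tail $m^h n^k$ as a distinctive separator. For \emph{disjointness from $B$}: any would-be shared tuple $(a, m^h, n^k) \in B$ has $h+k$ trailing entries in $\{m, n\}$, and combining this with the bound $D$ rules out such a membership. For \emph{pairwise disjointness of the family}: a coincidence $(a_1, m^{h_1}, n^{k_1}) = (a_2, m^{h_2}, n^{k_2})$ with $(h_1, k_1) \neq (h_2, k_2)$ forces $k_1 = k_2$ (by reading off the trailing $n$-block, using $m \neq n$) and then $a_1 = (a_2, m^{\delta})$ with $\delta = h_2 - h_1$; finitariness restricts which $\delta$ can arise for pairs in $A$, allowing us to pass to a sparse subfamily of $(h, k)$ that avoids all such coincidences. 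For \emph{multipliability with $B$}: an identity $(a, m^h, n^k, b) = (a', m^h, n^k, b')$ of concatenated tuples is resolved by locating the unique long $m^h n^k$ block in the combined tuple, which for $h, k > D$ pins down the decomposition and yields $a = a'$, $b = b'$.

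The main obstacle will be the combinatorial bookkeeping: finitariness provides only a \emph{dimensional} bound on tuples with bounded entries, and one must carefully translate this into a \emph{structural} obstruction on long suffix patterns of the form $m^h n^k$ inside tuples of $A \cup B$ whose other entries may be unbounded; in particular, the challenge is to exhibit a single cofinal family of $(h, k)$ on which all three properties---pairwise disjointness, disjointness from $B$, and multipliability with $B$---hold simultaneously.
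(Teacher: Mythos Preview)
Your approach is the same as the paper's, which is even terser: it sets $p=\max\{m,n\}$, picks $j$ with $A\cap\{0,\ldots,p\}^{l}=\emptyset$ for $l\ge j$, and then simply asserts that $A(m^{h},n^{k})$ is disjoint from $A$, multipliable with $B$, and pairwise disjoint when $k\ne k'$, all for $h,k\ge j$, without spelling out the parsing of the $m^{h}n^{k}$ block. Your final paragraph is right to flag the bookkeeping issue---finitariness only bounds the length of tuples \emph{all} of whose entries are $\le p$, so a tuple with one large coordinate may still carry an arbitrarily long $m^{h}n^{k}$ block, and this is exactly what forces one to pass to a sparser cofinal family of $(h,k)$ rather than claim the properties for every $h,k\ge D$; the paper's sketch glosses over this point just as yours does, and it also tacitly relies on $m\ne n$ (used in the $k\ne k'$ step), which you made explicit.
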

\proof
The first assertion is obvious. In order to prove the remaining ones,
put $p=\max\{m,n\}$ and assume that $A\cap \{0,\ldots,p\}^{l}=\0$ for 
$l\ge j$. Then $A(m^{h},n^{k})$ is disjoint from $A$ and multipliable 
with $B$ whenever $h,k\ge j$. Moreover  $A(m^{h},n^{k})\cap A(m^{h'},n^{k'})=\0$ 
whenever $h,k,h',k'\ge j$ and $k\ne k'$.
\qed

\medskip
In the following proposition we list several important properties of 
the binary relation $\prec$.

\begin{proposition}\label{preorder} ${}$
    
    $(i)$ $A\prec B$ holds if and only if $B$ is equinumerous to a proper 
    superset $B'$ of $A$. Hence, given two sets in $\WW$, one is 
    equinumerous to a superset of the other one. 
    
$(ii)$ The relation $\prec$ is a preorder on $\WW$ that induces a 
total ordering on the quotient set $\Nk=\WW/\eq$.
\end{proposition}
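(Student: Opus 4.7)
The plan is to prove Part (i) by constructing a disjoint equinumerous copy via Proposition \ref{copy}, then to power Part (ii) with a short monotonicity lemma proved by the same template.

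For Part (i), the backward direction is immediate: if $B \eq B''$ with $B'' \supset A$, then the pair $(A',B'):=(A,B'')$ witnesses $A \prec B$. For the forward direction, assume $A \prec B$ with $A \eq A'$, $B \eq B'$, $A' \subset B'$. Set $D = B' \setminus A' \ne \0$, and by Proposition \ref{copy} (which uses \UP) choose a copy $D'' \eq D$ disjoint from $A$; let $B'' = A \cup D''$, so $B'' \supset A$ strictly. Since $B' = A' \cup D$ and $B'' = A \cup D''$ are disjoint unions with $A \eq A'$ and $D \eq D''$, the additivity consequence of \AP (Proposition \ref{sumdif}) yields $B'' \eq B' \eq B$. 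The ``hence'' clause then follows from \ZP combined with (i): the strict cases provide proper supersets, while in the case $A \eq B$ taking $X = B$ (a non-strict superset of $B$) exhibits $A$ as equinumerous to a superset of $B$.

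For Part (ii), the key tool is a \emph{monotonicity lemma}: if $X \eq Y$ and $Z \supseteq X$, then some $Z' \supseteq Y$ satisfies $Z \eq Z'$. Its proof runs parallel to (i): via Proposition \ref{copy} pick $W' \eq Z \setminus X$ disjoint from $Y$, set $Z' = Y \cup W'$, and apply additivity to the disjoint decompositions $Z = X \cup (Z \setminus X)$ and $Z' = Y \cup W'$. With this lemma, transitivity of $\prec$ is short: given $A \prec B$ and $B \prec C$, part (i) produces $B' \supset A$ and $C' \supset B$ with $B \eq B'$ and $C \eq C'$; applying the monotonicity lemma to $B \eq B'$ and $C' \supset B$ yields $C'' \supset B'$ with $C' \eq C''$, so $C'' \supset A$ and $C \eq C''$ exhibit $A \prec C$. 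Well-definedness of the associated non-strict relation on the quotient $\Nk$, together with its antisymmetry and totality, all follow directly from the trichotomy in \ZP.

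The principal obstacle is the appeal to additivity: \AP by itself does not give $A \cup B \eq A' \cup B'$ from $A \eq A'$ and $B \eq B'$ without some disjointness hypothesis. The role of \UP, invoked through Proposition \ref{copy}, is precisely to furnish pairwise disjoint equinumerous copies, after which the additive form of \AP captured in Proposition \ref{sumdif} applies cleanly.
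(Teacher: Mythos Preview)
Your argument is correct, but it departs from the paper's in two places worth noting.

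For part (i), the paper arranges (via Proposition~\ref{copy}) that $A\cap A'=A\cap B'=\emptyset$ and then applies \AP\ directly to the pair $A\cup(B'\setminus A')$ and $B'$, reading off $A\eq A'$ from the symmetric differences. You instead move the complement $D=B'\setminus A'$ to a disjoint copy $D''$ and invoke the Sum Principle \SP\ on the two disjoint unions. Both are fine; your route is perhaps more transparent, but note that in the paper's ordering Proposition~\ref{sumdif} (the \AP$\Leftrightarrow$\SP$\wedge$\DP\ equivalence) is stated \emph{after} Proposition~\ref{preorder}. There is no circularity---Lemma~\ref{brutto} and Proposition~\ref{sumdif} depend only on \AP---but you are making a forward reference.

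The more interesting divergence is in the monotonicity step. The paper's Lemma~\ref{tran} is proved non-constructively by trichotomy: given $A\eq B$ and $A'\supset A$, one uses part~(i) and \ZP\ to rule out $A'\eq B$ and $A'\prec B$, leaving only the desired conclusion. Your monotonicity lemma is instead proved constructively, by moving $Z\setminus X$ to a copy disjoint from $Y$ and applying \SP, exactly as in your proof of (i). This buys you something: your proof of transitivity of $\prec$ does not invoke \ZP\ at all (you only need \ZP\ at the very end, for antisymmetry and totality on the quotient), whereas the paper's route uses \ZP\ already inside Lemma~\ref{tran}. One small wrinkle: you state your lemma with $Z\supseteq X$ but then write ``yields $C''\supset B'$'' in the application; this does follow, since $C'\supset B$ strictly makes the transported copy nonempty and disjoint from $B'$, but it would be cleaner to state the lemma with strict inclusions throughout.
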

\begin{proof}
$(i)$ If there exists a proper superset $B'$ of $A$ that is 
equinumerous to $B$, then, from the definition 
of $\prec$, we conclude that $A\prec B$.
Conversely,  suppose that $A\prec B$, that is there exist 
sets $A'$ and $B'$ such 
that $A'\subset B'$, $A\approx A'$ and $B\approx B'$.
By possibly applying Proposition \ref{copy}, we may assume without loss 
of generality  that $A\cap A'=A\cap B'=\emptyset$. 
Put $C=B'\setminus A'$, and consider $A\cup C$, which 
 is a proper superset of $A$. By \AP\ we have $$A\cup C\approx B\
\Longleftrightarrow\ A\cup C\approx B'\ \Longleftrightarrow\ A=
(A\cup C)\setminus B'\approx B'\setminus (A\cup C)=A'.$$
So $A\cup C$ is a proper superset of $A$ equinumerous to $B$.

\smallskip
$(ii)$ Clearly $A\not\prec A$ because $A$ cannot be equinumerous to a proper 
superset of itself.
In order to prove transitivity of the relation $\prec$, we state the following lemma:
\begin{lemma}\label{tran}
Let $\approx$ be a relation of equinumerosity on $\WW$, and let $A\approx B$.
Then for each   proper superset $A'$ of $A$
 there exists a proper superset $B'$ of $B$ such that $A'\approx B'$.     
\end{lemma}
\begin{proof}
According to $(i)$, let us consider the three possible cases: 

$(1)$ $A'\approx B$: then $A'\approx A$ against \ZP.

$(2)$ There exists a proper superset $A''$ of $A'$ such that $A''\approx B$: 
then  $A$ would be equinumerous 
to the proper superset $A''$, again contradicting \ZP.

$(3)$ There exists a proper superset $B'$ of $B$ such that $A'\approx B'$ and the lemma
is proved.
\qed
\end{proof}

\medskip
Now we can prove transitivity of the relation $\prec$. Assume that
$A\prec B$ and $B\prec C$: then
there exist proper supersets $A',B'$ of $A,B$ respectively, such that $A'\approx B$ 
and $B'\approx C$. Then, by 
Lemma \ref{tran}, there exists a proper superset $A''$ of $A$ such that 
$A''\approx B'\approx C$ and so 
 $A\prec C$. \qed
\end{proof}

\bigskip

Now we prove that the principle \AP\ is equivalent to the 
conjunction of the second and 
third common notions of Euclid, when formalized in the 
following way:
\begin{description}
 \item [\SP\ (Sum Principle)] Let $A,A',B,B'\in\WW$ be such that $A\cap B=\emptyset$
and $A'\cap B'=\emptyset$. If $A\approx A'$ and $B\approx B'$, then
$A\cup B\approx A'\cup B'$.
\smallskip
 \item [\DP\ (Difference Principle)] Let $A,A',C,C'\in\WW$ be such that
$A\subseteq C$ and $A'\subseteq C'$. If $A\approx A'$ and 
$C\approx C'$, then $C\setminus A\approx C'\setminus A'$.
\end{description}

This equivalence has been proved  in \cite{tesi}, and also in \cite{QSU} for a 
slightly different notion 
of equinumerosity. We repeat the proof here for convenience of the 
reader, because we need these facts in the sequel. 
We begin by stating the following lemma:

\begin{lemma}\label{brutto}
  Let $\eq$ be an equivalence relation for which \AP\ holds and let 
	$A,B,A',B'\in\WW$  be such that                       %
  $B\subseteq A$ and $B'\subseteq A'$.  If $B\approx B'$,
 then                                                                        %
  $$A\setminus B\approx A'\setminus B'\ \ \Iff\ \ A\approx A'.$$     
\end{lemma}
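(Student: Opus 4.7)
My plan is to first dispatch the easy special case $B=B'$ by direct set-theoretic bookkeeping with \AP, and then reduce the general case to this special one by replacing $B$ and $B'$ with a common ``fresh'' copy disjoint from everything, produced via Proposition \ref{copy}.

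For the special case $B=B'$, I will set $C=A\setminus B$ and $C'=A'\setminus B$, so that $A=B\sqcup C$ and $A'=B\sqcup C'$ are disjoint unions. From $B\cap C=B\cap C'=\emptyset$ a short calculation gives $A\setminus A'=C\setminus C'$ and $A'\setminus A=C'\setminus C$. Applying \AP\ to both pairs $(A,A')$ and $(C,C')$ then yields the chain of equivalences
$$A\approx A'\ \Iff\ A\setminus A'\approx A'\setminus A\ \Iff\ C\setminus C'\approx C'\setminus C\ \Iff\ C\approx C',$$
which is the desired statement.

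For the general case, I will invoke Proposition \ref{copy} to produce some $B^{\ast}\in\WW$ with $B^{\ast}\approx B$ and $B^{\ast}\cap(A\cup A')=\emptyset$; by transitivity $B^{\ast}\approx B'$ as well. Setting $A_1=(A\setminus B)\cup B^{\ast}$ and $A_1'=(A'\setminus B')\cup B^{\ast}$, the disjointness of $B^{\ast}$ from $A$ and from $A'$ gives at once $A\setminus A_1=B$, $A_1\setminus A=B^{\ast}$, $A'\setminus A_1'=B'$, $A_1'\setminus A'=B^{\ast}$. Thus \AP\ combined with $B\approx B^{\ast}\approx B'$ yields $A\approx A_1$ and $A'\approx A_1'$, so by transitivity $A\approx A'\ \Iff\ A_1\approx A_1'$. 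Since moreover $A_1\setminus B^{\ast}=A\setminus B$ and $A_1'\setminus B^{\ast}=A'\setminus B'$, applying the already-established special case to the quadruple $(A_1,B^{\ast},A_1',B^{\ast})$ gives $A_1\approx A_1'\ \Iff\ (A\setminus B)\approx(A'\setminus B')$. Chaining these two equivalences finishes the proof.

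The only subtle point is ensuring that $B^{\ast}$ in the reduction can be chosen both equinumerous to $B$ \emph{and} disjoint from $A\cup A'$, so that the symmetric differences $A\,\triangle\,A_1$ and $A'\,\triangle\,A_1'$ collapse cleanly to $B\,\triangle\,B^{\ast}$ and $B'\,\triangle\,B^{\ast}$; Proposition \ref{copy} applied to $B$ and $A\cup A'\in\WW$ delivers exactly such a $B^{\ast}$. Everything else is routine bookkeeping.
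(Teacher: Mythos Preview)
Your special case $B=B'$ is handled cleanly and correctly. The reduction step, however, has a genuine gap: Proposition~\ref{copy} explicitly \emph{assumes} \UP\ in order to conclude that $B^{\ast}=B(m^{h},n^{k})\approx B$. But the lemma is stated only for an equivalence relation satisfying \AP, nothing more. So you are invoking a hypothesis that you do not have. This is not a cosmetic issue: the lemma is the engine behind Proposition~\ref{sumdif}, whose whole point is that \AP\ \emph{alone} already implies \SP\ and \DP; an argument that smuggles in \UP\ would make that proposition circular or at least much weaker. Concretely, under \AP\ alone there need not exist any $B^{\ast}\approx B$ disjoint from $A\cup A'$ (think of $\approx$ being plain equality), so your construction of $A_1,A_1'$ cannot get off the ground.

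The paper's proof avoids this by never leaving the four given sets: it partitions $A\cup A'$ into eight disjoint pieces $B_0,B_0',C,B_1,B_1',C_0,C_0',E$ and expresses each of $B,B',A\setminus A',A'\setminus A,(A\setminus B)\setminus(A'\setminus B'),(A'\setminus B')\setminus(A\setminus B)$ as a union of some of these pieces. Repeated use of \AP\ (which lets one cancel or adjoin a common disjoint piece) then pushes the equivalence $B\approx B'$ through to the desired biconditional. It is less elegant than your reduction, but it is the price of working with \AP\ only. If you want to keep your two--step strategy, you would need to prove the ``fresh copy'' step from \AP\ alone, and that is not possible in general; the decomposition is essentially forced.
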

\proof
Put   
  \smallskip\noindent                                                           %
  $B_0=B\setminus A'$,\ \  $B'_0=B'\setminus A$,\ \  $C=B\cap B'$,\ \                     %
  $B_1=B\setminus (B_0\cup C)$,\ \  $B'_1=B'\setminus (B'_0\cup C)$,                  %
  \smallskip\noindent                                                           %
  $C_0=A\setminus(B\cup A')$, $C'_0=A'\setminus (B'\cup A)$,                             %
  $E=(A\cap A')\setminus (B\cup B').$                                                   %
  \smallskip\noindent                                                           %
So we obtain pairwise disjoint sets $B_{0},B'_0,C,B_1,B'_1,C_0,C'_0,E$
such that  
\smallskip\noindent                                                           %
  $B=B_0\cup B_1\cup C$,\ \                                                 %
  $B'=B'_0\cup B'_1\cup C$,\ \                                              %
  \smallskip\noindent                                                           %
  $A\setminus A'=B_0\cup C_0$,\ \                                                  %
  $A'\setminus A=B'_0\cup C'_0$,\ \                                                %
  \smallskip\noindent                                                           %
  $A\setminus B=B'_1\cup C_0\cup E$,\ \                                            %
  $A'\setminus B'=B_1\cup C'_0\cup E$.
\smallskip 
By \AP\, we can write 
$$A\approx A'\ \Longleftrightarrow\ B_0\cup C_0=A\setminus 
A'\approx A'\setminus A=B'_0\cup C'_0,$$
$$ A\setminus B\approx A'\setminus B'\ \Longleftrightarrow\
B'_1\cup C_0=(A\setminus B)
\setminus(A'\setminus B')\approx (A'\setminus B')\setminus(A\setminus B)=B_1\cup C'_0.$$
By hypothesis and \AP\ we can write
$$B_0\cup B_1\cup C=B\approx B'=B'_0\cup B'_1\cup C\ \Longrightarrow\
B_0\cup B_1\approx B'_0\cup B'_1$$
and hence $$B_0\cup B_1\cup C_0\approx B'_0\cup B'_1\cup C_0.$$
Suppose that $A\setminus B\approx A'\setminus B'$, that is $ B'_1\cup C_0\approx B_1\cup C'_0$, 
it follows that $B'_0\cup B'_1\cup C_0\approx B'_0\cup B_1\cup C'_0$, hence 
$B_0\cup B_1\cup C_0\approx B'_0\cup B_1\cup C'_0 $ and we conclude $A\approx A'$.

Conversely, if $A\approx A'$, that is $B_0\cup C_0\approx B'_0\cup C'_0$, we have 
$B_0\cup B_1\cup C_0\approx B'_0\cup B_1\cup C'_0$, hence $B'_0\cup B'_1\cup C_0\approx 
B'_0\cup B_1\cup C'_0$ and we conclude $A\setminus A'\approx A'\setminus A$.

\qed

  \begin{proposition}\label{sumdif}                                             %
   Let $\eq$ be an equivalence relation. The Axiom \AP\ is 
equivalent to the conjunction of the two principles \SP\ and \DP.                
  \end{proposition}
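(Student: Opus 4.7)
The plan is to argue both implications separately, in each case exploiting the disjoint decomposition $X = (X \cap Y) \sqcup (X \setminus Y)$, and leveraging Lemma \ref{brutto} (which has already been proved from \AP\ alone) for the forward direction.

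For \AP\ $\Rightarrow$ \DP, given $A \subseteq C$, $A' \subseteq C'$ with $A \approx A'$ and $C \approx C'$, I would instantiate Lemma \ref{brutto} with its variables $B,A,B',A'$ taken as our $A,C,A',C'$: the ``only if'' half of the lemma's biconditional immediately yields $C \setminus A \approx C' \setminus A'$. For \AP\ $\Rightarrow$ \SP, given disjoint pairs $A,B$ and $A',B'$ with $A \approx A'$ and $B \approx B'$, I would apply the same lemma with $B,A,B',A'$ taken as $A, A \cup B, A', A' \cup B'$: by disjointness $(A \cup B) \setminus A = B$ and $(A' \cup B') \setminus A' = B'$, so the hypothesis $B \approx B'$ is precisely the ``left side'' of the lemma's biconditional, and its ``if'' direction then produces $A \cup B \approx A' \cup B'$.

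For the converse, assuming \SP\ and \DP, I would fix arbitrary $A,B \in \WW$ and write the disjoint decompositions $A = (A \cap B) \cup (A \setminus B)$ and $B = (A \cap B) \cup (B \setminus A)$. If $A \approx B$, applying \DP\ with the common subset $A \cap B \subseteq A$ and $A \cap B \subseteq B$ (together with reflexivity $A \cap B \approx A \cap B$) gives $A \setminus (A \cap B) \approx B \setminus (A \cap B)$, i.e.\ $A \setminus B \approx B \setminus A$. If instead $A \setminus B \approx B \setminus A$, applying \SP\ to the two disjoint decompositions above (again with reflexivity on $A \cap B$) yields $A \approx B$.

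Granting Lemma \ref{brutto}, there is no substantive obstacle here; the argument is pure bookkeeping about which set plays which role at each instantiation. The only points requiring minor care are making the reflexivity $A \cap B \approx A \cap B$ explicit at the right moments, and recording the set-theoretic identity $(A \cup B) \setminus A = B$ (valid under $A \cap B = \emptyset$) when applying Lemma \ref{brutto} in the direction that produces \SP.
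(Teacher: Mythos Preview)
Your proof is correct and follows essentially the same approach as the paper: Lemma \ref{brutto} for the forward direction (the paper simply says ``by Lemma \ref{brutto}, both \SP\ and \DP\ hold'', and you have spelled out the two instantiations), and the disjoint decomposition $A = (A \cap B) \cup (A \setminus B)$ with reflexivity on $A\cap B$ for the converse. One small slip in wording: you have the ``if'' and ``only if'' labels swapped in both applications of the lemma --- for \DP\ you use the implication $A \approx A' \Rightarrow A \setminus B \approx A' \setminus B'$, which is the \emph{if} half of the biconditional as stated, and dually for \SP\ you use the \emph{only if} half --- but the logical content is right.
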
                                                             %
\proof
The conjunction of \SP\ and \DP\ yields \AP, namely 
$$(A\setminus A')\cup(A\cap A')=A\approx A'=(A'\setminus A)\cup(A\cap 
A')\
\Longrightarrow^{DP}
(A\setminus A')\approx(A'\setminus A),$$
$$A\setminus(A\cap A')=(A\setminus A')\approx(A'\setminus A)=
A'\setminus(A\cap A')\ \Longrightarrow^{SP}
A\approx A'.$$
Conversely, assume \AP: then, by Lemma \ref{brutto}, both \SP\ and \DP\ hold.
\qed

\medskip
We  prove now that our notion of equinumerosity satisfies what can be
viewed as a necessary condition, \emph{videlicet}
that \emph{finite point-sets are equinumerous if and only if they 
have the same 
``number of elements''}. 
\begin{proposition}\label{fin}
	Let $\eq$ be an equinumerosity relation, and let 
	$A,B\in\WW$ be finite sets. Then
	$$A\approx B\ \Iff\ |A|=|B|.$$
       Moreover, if $X$ is infinite, then $X\succ A$.
       Hence $\N$ can be taken as an initial segment of the set of
      numerosities $\Nk$ corresponding to $\eq$.
\end{proposition}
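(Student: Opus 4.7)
My plan is to establish the biconditional by proving the forward direction via induction on cardinality, then deducing the converse and the infinite claim from trichotomy. The key preparatory fact is the remark preceding the proposition: any two singletons are equinumerous (a consequence of \UP, \TP, and the concatenation convention). For the forward direction, given finite $A,B\in\WW$ with $|A|=|B|=n+1$, I would pick $a\in A$, $b\in B$ and write $A=A_0\cup\{a\}$, $B=B_0\cup\{b\}$ disjointly, with $|A_0|=|B_0|=n$. The inductive hypothesis yields $A_0\approx B_0$, the singleton remark gives $\{a\}\approx\{b\}$, and since the unions are disjoint, applying \SP\ (available via Proposition \ref{sumdif}) gives $A\approx B$. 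The base case $n=0$ is trivial.

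For the converse, suppose $|A|\ne|B|$, say $|A|<|B|$. I would pick a proper subset $B'\subsetneq B$ with $|B'|=|A|$; the forward direction gives $A\approx B'$, so taking $X'=B$ and $A'=B'$ in the definition of $\prec$ exhibits $A\prec B$, hence $A\not\approx B$ by \ZP. For the infinite claim, given an infinite $X\in\WW$ and a finite $A\in\WW$, I would choose any finite $A'\subseteq X$ with $|A'|=|A|$; since $X$ is infinite, $A'\subsetneq X$, and by the forward direction $A\approx A'$. Setting $X'=X$ in the definition of $\succ$ then shows $X\succ A$ immediately. Note that the various finite subsets used above automatically lie in $\WW$, since any finite set of tuples is finitary.

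Finally, to see that $\N$ sits inside $\Nk$ as an initial segment, I would send $n\mapsto\nk(\{0,\ldots,n-1\})$: the biconditional makes this well-defined and injective, and containment of the initial segments of $\N$ translates, via the definition of $\prec$, into the strict order on numerosities. That the image is an initial segment follows because any $\nk(C)\prec\nk(\{0,\ldots,n-1\})$ forces $C$ to be finite --- otherwise the infinite claim combined with \ZP\ would give $C\succ\{0,\ldots,n-1\}$, a contradiction --- so $C$ has some cardinality $m<n$ and already lies in the image. The main obstacle, if any, is the foundational singleton fact, which the preceding remark has already secured through a non-obvious combination of \UP\ with \TP; beyond that, the argument is a routine chaining of \SP\ (for the induction) and \ZP\ (for the strict comparisons).
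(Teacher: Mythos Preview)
Your proof is correct. The paper organizes the argument a little differently: rather than proving $|A|=|B|\Rightarrow A\approx B$ by induction and then deducing the converse from \ZP, it runs a single biconditional induction on $k=\min\{|A|,|B|\}$, invoking Lemma~\ref{brutto} directly to obtain $A\approx B\iff A\setminus\{a\}\approx B\setminus\{b\}$ at each step. That packaging forces the paper to establish an extra base-case observation (anything equinumerous to a singleton is itself a singleton), which your decomposition sidesteps by pulling the reverse implication out of trichotomy. Both routes ultimately rest on the same machinery---Lemma~\ref{brutto}, used directly in the paper and via \SP\ (Proposition~\ref{sumdif}) in your version---so the difference is organizational rather than substantive; your split is arguably cleaner, while the paper's version is more compact.
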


{\bf Proof.}
First observe that $\emptyset$, being a proper subset of any nonempty 
set $A$, cannot  be equinumerous to $A$.

Secondly, we have already remarked that any two singletons $\{a\},\{b\}$ 
are equinumerous.
Moreover, if $C\eq \{b\}$, then $C$ is a singleton. In fact
let $c$ be an element of $C$; then 
$\{c\}\approx  \{b\}\approx C$, hence $\{c\}=C$, because $C$ cannot 
be a proper superset of  \{c\}.

Finally, given two finite sets $A$ and $B$, we proceed by induction 
on $k$, the least cardinality of the sets $A,B$.
The case $k=1$ has already been dealt with. 
Assume the thesis  true for $k\leq n$ and let $A$ and $B$ be 
finite sets such 
that $n+1=\min\{|A|,|B|\}$.
Pick $a\in A$
and $b\in B$, and put $A'=A\7\{a\}$, $B'=B\7\{b\}$. Since 
$\{a\}\eq\{b\}$, Lemma \ref{brutto} and the induction hypothesis yield
$$A\eq B\ \Iff\ A'\eq B'\ \Iff\ |A'|=|B'|\ \Iff\ |A|=|B|.$$ 

Now if $X$ is an infinite set and $A$ is a finite set, we can find a proper subset $B$ of $X$, such that 
 $|A|=|B|$, so we conclude that $A\prec X$. 
 
\qed

By the above proposition, we can identify
each natural number $n\in\N$, with the
equivalence class of all those point sets
that have finite cardinality $n$.

\medskip
Starting from the equivalence relation of \emph{equipotency},
Cantor introduced the algebra of cardinals by means
of disjoint unions and Cartesian products.
So we could similarly introduce an algebra on ``numerosities''.
The given axioms have been chosen so as to guarantee that numerosities are naturally equipped
with a ``nice'' algebraic structure.
(This is to be contrasted with the awkward cardinal algebra,
where \emph{e.g.}
$\kappa+\mu=\kappa\cdot\mu=\max\{\kappa,\mu\}$ for all infinite
$\kappa,\mu$.)

\begin{theorem}\label{ring}
    Let $\Nk$ be the set of numerosities of the equinumerosity 
    relation $\eq$. Then
there exist unique operations $+$ and $\cdot$,
and a unique linear order $<$ on $\Ng$, such that for
all point sets $X,Y\in\WW$:
\begin{enumerate}
\item
$\ng(X)+\ng(Y)=\ng(X\cup Y)$ whenever $X\cap Y=\emptyset$\,;
\item
$\ng(X)\cdot\ng(Y)=\ng(X\times Y)$ whenever $X, Y$ are multipliable\,; 

\item
$\ng(X)<\ng(Y)$ if and only if $Y\approx Y'$
for some proper superset $Y'\supset X$.
\end{enumerate}

The resulting structure on $\Ng$
is the non-negative part of a discretely ordered ring
$(\mathfrak{R}, 0, 1, +, \cdot, <)$.
Moreover, if the fundamental subring of $\Rg$ is identified with 
$\Z$, then
$\ng(X)=|X|$ for every finite point set $X$.
\end{theorem}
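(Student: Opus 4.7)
The plan is to first show that $(\Nk,+,\cdot,<)$ is a totally ordered, commutative, cancellative semiring with identities $0=\ng(\emptyset)$ and $1=\ng(\{0\})$, and then apply the standard Grothendieck construction to obtain a discretely ordered ring $\Rg$ whose non-negative cone is $\Nk$. The three defining clauses determine $+$, $\cdot$, and $<$ uniquely, since by Proposition \ref{copy} any pair of numerosities admits disjoint (respectively multipliable) representatives, so the content lies in well-definedness and the verification of the axioms.

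For well-definedness, given $\nu,\mu\in\Nk$ pick disjoint representatives $X,Y$ via Proposition \ref{copy} and set $\nu+\mu:=\ng(X\cup Y)$; independence of the choice is the Sum Principle \SP, which follows from \AP\ via Proposition \ref{sumdif}. For $\cdot$, pick representatives that are also \emph{multipliable}, again via Proposition \ref{copy}, and set $\nu\cdot\mu:=\ng(X\times Y)$; independence is \PP. The order is well-defined by Proposition \ref{preorder}, and clause $(3)$ just restates the definition of $\prec$ in view of part $(i)$ of that proposition.

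For the semiring axioms, commutativity and associativity of $+$ are immediate from $\cup$ and $\ng(\emptyset)$ is the additive identity. Commutativity of $\cdot$ follows from \TP\ applied to the permutation swapping coordinate blocks; associativity is built into the concatenation convention; \UP\ makes $\ng(\{0\})$ the multiplicative identity. Distributivity combines the set-theoretic identity $X\times(Y\cup Z)=(X\times Y)\cup(X\times Z)$ with \SP\ and \PP\ on representatives chosen simultaneously disjoint and multipliable. Cancellation of $+$ is the key step: if $\ng(X)+\ng(Z)=\ng(Y)+\ng(Z)$ with $Z$ disjoint from $X\cup Y$, then $X\cup Z\eq Y\cup Z$, and applying Lemma \ref{brutto} with $B=B'=Z$ yields $X\eq Y$. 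Order compatibility with $+$ and with positive multiplication follows from \AP/\DP\ and \PP\ by analogous proper-superset arguments. Discreteness above $0$ is verified directly: if $0<\nu<1$, then a representative $X$ of $\nu$ would be a proper subset of a set equinumerous to a singleton, but the argument in the proof of Proposition \ref{fin} shows that any such set is itself a singleton, forcing $X=\emptyset$ and $\nu=0$, a contradiction. Hence $1$ is the least positive numerosity.

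Applying the Grothendieck construction to the cancellative abelian monoid $(\Nk,+)$ yields an abelian group $\Rg$ in which $\Nk$ embeds as the non-negative cone; multiplication and order extend uniquely, and the semiring and order properties established above translate into the ring and ordered-ring axioms for $\Rg$, which is then discretely ordered since $1$ is minimal positive. The last assertion reduces to Proposition \ref{fin}: the fundamental subring is $\Z\cdot 1$, and $\ng(X)=|X|\cdot 1$ for every finite $X$. The main technical obstacle throughout is the simultaneous choice of representatives: to verify well-definedness of $\cdot$, distributivity, and cancellation together, one must, for any finite list of given numerosities, produce concrete representing point sets that are pairwise disjoint and pairwise multipliable, and it is exactly the $A(m^h,n^k)$ construction of Proposition \ref{copy} that makes this flexibility available.
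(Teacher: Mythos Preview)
Your proof is correct and essentially self-contained: you verify directly that $(\Nk,+,\cdot,<)$ is a commutative cancellative totally ordered semiring (using Proposition~\ref{copy} for representatives, Proposition~\ref{sumdif} and Lemma~\ref{brutto} for well-definedness and cancellation, Proposition~\ref{preorder} for the order, and Proposition~\ref{fin} for discreteness), and then pass to the Grothendieck ring. This is exactly the route the paper alludes to when it says the theorem could be proved ``by the very same arguments used in \cite{QSU} or in \cite{tesi}''.

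However, the paper deliberately does \emph{not} take this route. Instead it defers the proof of Theorem~\ref{ring} to the stronger Theorem~\ref{ser}: it introduces the ring $\RR$ of bounded formal power series, associates to each $X\in\WW$ its characteristic series $S_X$, and shows that equinumerosities correspond biuniquely to \emph{gauge ideals} $\Ik\subseteq\RR$, with $\Nk$ realized concretely as the non-negative part of $\RR/\Ik$. The ordered-ring structure on $\Nk$ is then inherited from the quotient ring rather than built by hand. What the paper's approach buys is an explicit algebraic model for $\Rg$ and, simultaneously, a classification of \emph{all} equinumerosity relations; what your approach buys is a shorter, more elementary argument that stays entirely within the combinatorics already developed and does not require the power-series machinery.
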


We could prove the above theorem by the very same arguments used in 
\cite{QSU} or in \cite{tesi}. However we prefer to obtain a more 
precise algebraic characterization of the arithmetic of numerosities. 
To this aim, we consider a suitable ring of formal power 
series with 
integer coefficients, and we prove that the set of numerosities can be identified 
with
the non-negative part of the 
quotient of this ring   modulo a suitable prime ideal.

\noindent
$\bullet$ Let $\TT=\la t_{n}\mid\, n\in\N\ra$ be a sequence of 
 indeterminates.  
 Let $\Ab$ be the set of all eventually zero 
 sequences
 $\aa=(a_0,a_1,\ldots)$ of non-negative integers, and 
denote by $t^{\aa}$ the monomial 
$\prod_{i\in\N} t_i^{a_i}.$

So any series  $S$ in the variables of $\TT$ can be written as
$S=\sum_{\aa\in\Ab} n_{\aa}t^{\aa}$ {where}\,\,$
n_{\aa} $
{is the  coefficent of the monomial}\ $t^{\aa}$.

    \noindent
$\bullet$   Given a point
$x=(x_{1},\ldots,x_{d})\in\N^{d}$, consider the sequence $\aa\in\Ab$, 
where $a_i=|\{j\mid x_j=i\}|$ and associate to $x$ the \emph{monomial}  
$t_{x}=t^{\aa}.$

\noindent
$\bullet$ The \emph{characteristic series}
of the nonempty point set $X\in\WW$ is the
 formal series
$$S_{X}=\sum_{x\in X}t_{x}=\sum_{\aa\in\Ab}n_{\aa}t^{\aa},\
\mbox{where}\ \ n_{\aa}=|\{x\in X\mid t_{x}=t^{\aa}\}|.$$ 
(If $X=\emptyset$,  put $S_X=0$.)
  
\noindent
$\bullet$ Characteristic series behave well with respect to
 \emph{unions}, \emph{differences} and  \emph{products}:
$$\ \ S_{X}+S_{Y}=S_{X\cup Y}+S_{X\cap Y}\ \ \mbox{,}\ \
\ S_X-S_Y=S_{X\setminus Y}\ \mbox{if}\ Y\subset X$$
$$S_{X}\cdot S_{Y}=S_{X\times Y}\ \mbox{if}\ X,Y\ \mbox{are 
multipliable}.\footnote{~Remark that, if the product $X\* Y$ is 
considered as a multiset where each tuple comes with its 
multiplicity, then the equality holds for all $X,Y\in\WW$.}$$

\begin{remark}\label{condizione}
A series $S=\sum n_{\aa}t^{\aa}$ with \emph{non-negative} integer 
coefficients  is the characteristic 
series of a set $X\in\WW$ if and only if for all $\aa=(a_0,a_1,\ldots)$ we have 
$n_{\aa}\leq\frac{k!}{\prod a_i!}$, where $k=\sum_{i}a_{i}$. 
\end{remark}
 In  fact, the number of 
different sequences 
$\aa=(a_0,a_1,\ldots)$ that correspond to the same monomial $t^{\aa}$ is 
$\frac{k!}{\prod a_i!}$, where $k=\sum_{i}a_{i}$ is the 
degree of the monomial.

\medskip 
\noindent
$\bullet$ Let $\RR$ be  the ring of
 all formal series of \emph{bounded degree $d_{n}$ in each variable} 
 $t_{n}\in\TT$
with coefficients $n_{\aa}$ such that, for some 
$ b\in\N$,
$$|n_{\aa}|\le
b\frac{(\sum_{i}a_{i})!}{\prod a_i!} . $$

\noindent
$\bullet$ Let $\RP$ be the
multiplicative subset of the \emph{positive  series}, \ie\ 
the series  in $\RR$ having only positive 
coefficients.

\noindent
$\bullet$ Let $\Ik_{0}$ be the ideal of $\RR$
generated by $\{t_{n}-1\,\mid\, n\in\N\}$.

\medskip
It is easily seen that 
$\RR$ is the subring with 
identity of $\Z[[\TT]]$ generated by
   the set of the characteristic series of all point sets. Moreover
every positive series $P\in\RP$ is equivalent modulo $\Ik_{0}$ 
to some characteristic series.

\begin{lemma}\label{equivalenza}
 Every positive series $P\in\RP$ is equivalent modulo $\Ik_{0}$ 
to the characteristic series of some set in $\WW$. So any 
series  $S\in\RR$ can be written as $S=S_{X}-S_{Y}+S_{0}$ for suitable 
$X,Y\in\WW$ and $S_{0}\in\Ik_{0}$. 
\end{lemma}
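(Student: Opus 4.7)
My plan is to first prove the main assertion — that every $P \in \RP$ is equivalent modulo $\Ik_0$ to some $S_X$ with $X \in \WW$ — and then derive the second statement by splitting an arbitrary $S \in \RR$ into its positive and negative parts $P_+ - P_-$ (both inheriting the bounded-degree and coefficient growth of $S$, hence lying in $\RP$) and applying the first part to each.

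For the main claim, fix $P = \sum_\aa n_\aa t^\aa \in \RP$ with $0 \le n_\aa \le b \cdot \frac{k!}{\prod a_i!}$ (where $k = \sum a_i$) and $\deg_{t_n} P \le d_n$. The strategy is to construct $X = \bigsqcup_\aa Y_\aa$, where for each $\aa$ with $n_\aa > 0$ the set $Y_\aa$ consists of $n_\aa$ distinct tuples whose monomials are all congruent to $t^\aa$ modulo $\Ik_0$. The key fact is that $t_d \equiv 1 \pmod{\Ik_0}$ for every $d$, so the tuple $y = (a_{\pi(1)}, \ldots, a_{\pi(k)}, d)$ obtained by permuting the multiset associated to $\aa$ and appending a ``decoration'' $d \notin \mathrm{supp}(\aa)$ has monomial $t_y = t^\aa \cdot t_d \equiv t^\aa$; moreover, for each fixed $d$ there are exactly $\frac{k!}{\prod a_i!}$ distinct such tuples. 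I therefore pick, for each $\aa$, a set $D_\aa$ of $b$ decorations disjoint from $\mathrm{supp}(\aa)$, producing $b \cdot \frac{k!}{\prod a_i!} \ge n_\aa$ candidate tuples, and take $Y_\aa$ to be any $n_\aa$ of them.

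Three verifications remain. First, $S_X \equiv P \pmod{\Ik_0}$ is immediate from $t_d \equiv 1$. Second, $X \in \WW$ and $S_X \in \RR$ both follow from the bounded-degree hypothesis on $P$: for each $n$ the set of $\aa$'s with $\mathrm{supp}(\aa) \subseteq \{0,\ldots,n\}$ is finite (its cardinality being bounded by $\prod_{i \le n}(d_i+1)$), so by placing the windows $D_\aa \subseteq [N_\aa, N_\aa + b)$ with $N_\aa \to \infty$ as the support of $\aa$ moves outside any fixed $\{0, \ldots, n\}$, only finitely many $Y_\aa$ contribute tuples with all entries $\le n$, and those tuples have bounded length; simultaneously, each variable $t_n$ gets degree at most $d_n + 1$ in $S_X$ (a decoration adds degree at most $1$). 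Third, the coefficients of $S_X$ satisfy the natural bound $\frac{k'!}{\prod a'_i!}$: the coefficient of $t^{\aa'}$ receives contributions from $Y_{\aa' - e_d}$ only when $a'_d = 1$, each such contribution being at most $\frac{(k'-1)!}{\prod a'_i!}$, and there are at most $k'$ such indices $d$, giving total $\le \frac{k'!}{\prod a'_i!}$; by Remark \ref{condizione}, $S_X$ is then the characteristic series of a genuine set in $\WW$.

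The main obstacle is the simultaneous bookkeeping for the decoration sets: the $D_\aa$ must avoid $\mathrm{supp}(\aa)$ (so that each tuple's monomial factorizes cleanly as $t^\aa t_d$), they must be spaced so that $X$ is finitary and $\deg_{t_n} S_X$ is finite for each $n$, and their sizes must be uniform so that the resulting coefficients of $S_X$ remain within the characteristic-series bound. All three requirements can be met together precisely because the bounded variable-degree assumption on $P$ keeps, for every $n$, the supply of relevant $\aa$'s with support in $\{0, \ldots, n\}$ finite.
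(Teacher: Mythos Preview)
Your argument has a genuine gap in the step you call ``immediate'': the claim $S_X \equiv P \pmod{\Ik_0}$ does not follow merely from $t_d \equiv 1 \pmod{\Ik_0}$ for each decoration $d$. The ideal $\Ik_0$ is the ideal of $\RR$ generated by $\{t_n - 1 : n \in \N\}$ in the ordinary ring-theoretic sense, so every element of $\Ik_0$ is a \emph{finite} $\RR$-linear combination $\sum_{i=1}^m r_i(t_{n_i} - 1)$; equivalently, $\Ik_0 = \bigcup_k (t_0 - 1, \ldots, t_k - 1)$. Your construction, however, uses infinitely many distinct decoration values (the windows $D_\aa$ march off to infinity), so $S_X - P$ is an infinite sum $\sum_d (t_d - 1) R_d$, and such a series need not lie in $\Ik_0$. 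Concretely, take $P = \sum_{n \in \N} t_n \in \RP$: your recipe produces $X = \{(n, d_n) : n \in \N\}$ with $d_n \to \infty$, and $S_X - P = \sum_n t_n(t_{d_n} - 1)$. Substituting $t_0 = \cdots = t_k = 1$ (a well-defined ring homomorphism on $\RR$, since each series there is a polynomial in each variable) annihilates every element of $(t_0 - 1, \ldots, t_k - 1)$ but sends $S_X - P$ to a series still containing, for every $n > k$ with $d_n > k$, the nonzero term $t_n(t_{d_n}-1)$. Hence $S_X - P \notin (t_0-1,\ldots,t_k-1)$ for any $k$, so $S_X - P \notin \Ik_0$.

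The paper avoids this by working globally rather than monomial-by-monomial: it first splits $P$ into at most $b$ characteristic series $S_{X_1}, \ldots, S_{X_s}$ plus a constant $a$ (possible because each coefficient $n_\aa$ is at most $b$ times the multinomial bound), and then multiplies each of these finitely many pieces by a single monomial $t_m^{h_i} t_n^{k_i}$ so that the corresponding point sets become pairwise disjoint (Proposition~\ref{copy}). The resulting correction $S_Y - P = \sum_i S_{X_i}(t_m^{h_i}t_n^{k_i} - 1) + a(t_m^{h_0}t_n^{k_0} - 1)$ is a finite sum of elements of $\Ik_0$, as required. Your per-monomial decoration idea can be salvaged only by reusing a \emph{bounded} stock of decoration values across all $\aa$ (with multiplicity, as in $t_m^h t_n^k$), which in effect collapses to the paper's argument.
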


\begin{proof}
Suppose that the series $S$ has  non-negative
coefficients $n_{\aa}\le 
b\frac{k!}{\prod a_i!}$, where $k=\sum_{i}a_{i}$. 
We can decompose $S$ in at most $b$  series with coefficients 
satisfying the conditions for being  characteristic, 
plus a non-negative integer $a$. So we can write 
$S=a+S_{X_1}+\ldots+S_{X_s}$, where 
$s\leq b$ and for all $i$, $X_i\in\WW$. (The sets $X_{i}$ may be not 
distinct, in general.)
According to Proposition \ref{copy},
we can multiply the integer $a$ and each series $S_{X_i}$ by 
 suitable monomials 
$t_m^{h}t_{n}^{k}$,  so that 
$at_m^{h}t_{n}^{k}$ is the characteristic series of a set $Y_{0}$ of 
tuples containing $h$ times $m$ and $k$ times $n$, and
the remaining series are the characteristic 
series of pairwise disjoint sets $Y_{i}$ equinumerous to $X_{i}$.
Clearly, each $S_{X_i}$ is equivalent modulo $\Ik_{0}$ to 
 $S_{Y_i}$, 
  so putting $Y=Y_{0}\cup Y_1\cup\ldots\cup Y_s$, we have 
that $S$ is equivalent modulo $\Ik_0$ to the characteristic 
series $S_Y=S_{Y_{0}}+\ldots+S_{Y_{s}}$.

The final assertion of the theorem follows by considering separately 
the positive and the negative parts of any given series of $\RR$.
\qed
\end{proof}

\medskip
In order to classify all equinumerosities, we introduce the following 
definition:
\begin{definition}
    Call  an ideal $\Ik$ of $\RR$ a
   {\emph{gauge ideal}} if 
   \begin{itemize}
       \item  $\Ik_{0}\incl \Ik$,

       \item  $\RP\cap\Ik=\0$, and

       \item for all $S\in\RR\7 \Ik$ there exists  
   $P\in\RP$ such that either $\ S+ P\in\Ik$ or $\ S- P\in\Ik$. 
\end{itemize}  
\end{definition}

Remark that when $\Ik$ is a gauge ideal the quotient $\RR/\Ik$  is a 
\emph{discretely ordered  ring} whose 
\emph{positive elements} are the cosets $P+\Ik$ for $P\in\RP$. 
In particular $\Ik$ is a \emph{prime} ideal of $\RR$ that is 
\emph{maximal} among the ideals disjoint from $\RP$.

Then we have

   \begin{theorem}\label{ser}
       There exists a biunique correspondence between 
       equinumerosity relations on the
       space $\WW$ of all point sets over $\N$ and gauge ideals on 
       the ring $\RR$ of all bounded power series in countably many 
        indeterminates. 
       In 
       this correspondence, if the equinumerosity $\eq$ corresponds to the 
       ideal  $\Ik$,  then
       \begin{equation}
    X\eq Y\ \Iff\ \ S_{X}-S_{Y}\in\Ik. \tag{**}
    \label{**}
\end{equation}

       More precisely, let $\nk:\WW\to\Nk$ be the numerosity function associated 
       to $\eq$,
       and let $\pi:\RR\to\RR/\Ik$ be the canonical projection. Then
   there exists a unique order preserving embedding $j$ of $\Nk$ onto 
   the non-negative part of $\RR/\Ik$
   such that the following diagram commutes
\bigskip
\begin{center}
\begin{picture}(90,70)
   \put(0,0){\makebox(0,0){$\Nk$}}
   \put(90,0){\makebox(0,0){$\RR/\Ik$}}
   \put(0,70){\makebox(0,0){$\WW$}}
   \put(45,35){\makebox(0,0){\diser}}
   \put(90,70){\makebox(0,0){$\RR$}}
   \put(45,6){\makebox(0,0){$j$}}
   \put(45,76){\makebox(0,0){$\Sigma$}}
   \put(-8,35){\makebox(0,0){$\nk$}}
   \put(100,35){\makebox(0,0){$\pi$}}
   \put(15,0){\vector(1,0){60}}
   \put(15,70){\vector(1,0){60}}
   \put(0,60){\vector(0,-1){50}}
   \put(90,60){\vector(0,-1){50}}
\end{picture}
\end{center}

	   \bigskip

\noindent
$($where $\Sg$ maps any $X\in\WW$ to its characteristic series 
$S_{X}\in\RP.)$

 The ordered semiring structure induced on $\Nk$ by $j$ satisfies the 
 conditions 
 \begin{enumerate}
\item
$\ng(X)+\ng(Y)=\ng(X\cup Y)$ whenever $X\cap Y=\emptyset$\,;

\item
$\ng(X)\cdot\ng(Y)=\ng(X\times Y)$\, whenever $X,Y$
are multipliable;
\item
$\ng(X)<\ng(Y)$ if and only if $Y\approx Y'$
for some proper superset $Y'\supset X$.
\end{enumerate}

	     \end{theorem}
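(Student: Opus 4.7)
The strategy is to set up a bijection via the criterion $X \eq Y \iff S_X - S_Y \in \Ik$. Going from an equinumerosity $\eq$ to a gauge ideal, I would define
$$\Ik_{\eq} := \{S_X - S_Y + S_0 \mid X, Y \in \WW,\ X \eq Y,\ S_0 \in \Ik_{0}\}.$$
Lemma \ref{equivalenza} guarantees every element of $\RR$ fits this template for some $X, Y, S_0$; the axioms on $\eq$ must ensure compatibility across different decompositions. In the opposite direction, given a gauge ideal $\Ik$, I set $X \eq_{\Ik} Y \iff S_X - S_Y \in \Ik$; the equivalence-relation axioms follow immediately from $\Ik$ being an additive subgroup.

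The first substantive task is verifying $\Ik_{\eq}$ is an ideal satisfying the gauge conditions. For additive closure I would use Proposition \ref{copy} to pass to disjoint equinumerous copies (each differing from the original only modulo $\Ik_0$ via \UP) and then invoke the Sum Principle (equivalent to \AP by Proposition \ref{sumdif}) to conclude $X \cup X' \eq Y \cup Y'$. For multiplicative closure, I would decompose $R \in \RR$ via Lemma \ref{equivalenza} as $R = S_Z - S_W + S_0'$, expand $(S_X - S_Y) R$, apply \PP (via multipliable copies from Proposition \ref{copy} and \UP) to each cross term $S_{X \times Z} - S_{Y \times Z}$ and $S_{X \times W} - S_{Y \times W}$, and absorb the tail $(S_X - S_Y) S_0' \in \Ik_0$. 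The inclusion $\Ik_0 \subseteq \Ik_{\eq}$ is trivial. For $\RP \cap \Ik_{\eq} = \emptyset$, I would argue that $P = S_X - S_Y + S_0 \in \RP$ with $X \cap Y = \emptyset$ forces, after rewriting $P + S_Y \equiv S_X \mod \Ik_0$ and using Lemma \ref{equivalenza} to write $P \equiv S_Z \mod \Ik_0$ with $Z$ disjoint from $X \cup Y$, the equivalence $Z \cup Y \eq X \eq Y$; by \DP this yields $Z \eq \emptyset$, so $Z = \emptyset$ by Proposition \ref{fin}, and a separate observation that $\RP \cap \Ik_0 = \emptyset$ (established by analyzing lowest-degree terms of the generators $t_n - 1$) closes the argument. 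For the trichotomy gauge condition, if $S = S_X - S_Y + S_0 \notin \Ik_{\eq}$, then $X \not\eq Y$; \ZP and Proposition \ref{preorder}(i) supply disjoint $Z \neq \emptyset$ with $Y \eq X \cup Z$ (or symmetrically), yielding $S + S_Z \in \Ik_{\eq}$.

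The reverse direction verifies $\eq_{\Ik}$ satisfies \AP--\PP by direct series manipulations: \AP from $S_X - S_Y = S_{X \setminus Y} - S_{Y \setminus X}$; \TP from $S_{T[A]} = S_A$ (the monomial depends only on the multiset of coordinates); \UP from $S_{A \times \{n\}} - S_A = (t_n - 1) S_A \in \Ik_0 \subseteq \Ik$; \PP from $S_{A \times B} - S_{A' \times B'} = (S_A - S_{A'}) S_B + S_{A'}(S_B - S_{B'}) \in \Ik$; and \ZP from the third gauge condition, converting $P \in \RP$ to $S_Z$ with $Z$ disjoint from $X, Y$ (Lemma \ref{equivalenza} and Proposition \ref{copy}; $Z \neq \emptyset$ because $\RP \cap \Ik = \emptyset$). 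The two constructions are mutually inverse since the defining criterion is the same in both directions. The map $j([X]_{\eq}) := \pi(S_X)$ is well-defined, injective, and order-preserving; surjectivity onto the non-negative part of $\RR/\Ik$ follows from Lemma \ref{equivalenza}. Properties (1), (2), (3) are immediate from $S_{X \cup Y} = S_X + S_Y$ (disjoint), $S_{X \times Y} = S_X \cdot S_Y$ (multipliable), and Proposition \ref{preorder}(i). The main obstacle is the bookkeeping involved in the ideal verification for $\Ik_{\eq}$ and especially the disjointness $\RP \cap \Ik_{\eq} = \emptyset$, which requires a careful interplay between formal-series congruences modulo $\Ik_0$ and the combinatorial content of \UP and the other equinumerosity axioms.
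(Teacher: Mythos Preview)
Your overall architecture matches the paper's closely: define the ideal from $\eq$, define $\eq$ from the ideal via $S_X-S_Y\in\Ik$, check the axioms, and build $j$. The verifications of \AP, \TP, \UP, \PP, \ZP\ from a gauge ideal, and the construction of $j$ with properties (1)--(3), are essentially what the paper does. However, your argument for the crucial step $\RP\cap\Ik_{\eq}=\emptyset$ has a real gap.

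You write $P\equiv S_Z\bmod\Ik_0$ and $P+S_Y\equiv S_X\bmod\Ik_0$, hence $S_{Z\cup Y}-S_X\in\Ik_0$, and then assert $Z\cup Y\eq X$. But the implication ``$S_A-S_B\in\Ik_0\Rightarrow A\eq B$'' is exactly the nontrivial content you still have to prove; it does not follow from anything you have established so far. In fact, if you try to derive it from what you already have, you find it is equivalent (via \ZP\ and \DP) to the very disjointness $\RP\cap\Ik_{\eq}=\emptyset$ you are trying to prove, so the reasoning is circular. Note also that \TP\ appears nowhere in your disjointness argument; this is a warning sign, since without \TP\ the axiom system is too weak to force the claim.

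The paper handles this step quite differently. It first proves a structural Claim: for every $S\in\Ik$ there are $h,k$ and finitely many pairs $Z_i\eq W_i$ with $t_0^h t_1^k S=\sum_i(S_{Z_i}-S_{W_i})$; this absorbs the $\Ik_0$-part into genuine equinumerosity differences via \UP\ and \PP. Then, assuming $S\in\RP\cap\Ik$, it tags each $Z_i,W_i$ by a distinct permutation $\sigma_i(P)$ of a fixed tuple $P$ to produce pairwise disjoint copies $X_i,Y_i$, sets $X=\bigcup X_i$, $Y=\bigcup Y_i$, and observes that because the left side is positive, every monomial of $S_Y$ already occurs in $S_X$; this lets one build a coordinate-permuting bijection $T:Y\to T[Y]\subseteq X$. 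Now \TP\ gives $T[Y]\eq Y\eq X$, whence $T[Y]=X$ by \ZP, forcing $S=0$. This combinatorial use of \TP\ is the missing idea in your sketch; you would need either to reproduce it or to give an independent proof that $\Ik_0$-congruence of characteristic series implies equinumerosity, which is of comparable difficulty.
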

	     
\proof

Given a gauge ideal $\Ik$ over $\RR$, we  define the equivalence relation 
over $\WW$ by condition (\ref{**}): $X\eq Y\ \Iff\ \ S_{X}-S_{Y}\in\Ik.$
We prove that the relation $\eq$ so defined is in fact an equinumerosity
relation.
\begin{description}
 \item[\AP] We can write 
 $S_X-S_Y=S_{X\setminus Y}+S_{X\cap Y}-S_{Y\setminus X}-S_{X\cap Y}
=S_{X\setminus Y}-S_{Y\setminus X}$, hence 
$X\eq Y\ \Iff\ X\setminus Y\eq Y\setminus X$.

\smallskip\noindent
\item[\ZP] Assume first  that $S_X-S_Y\in\Ik$, that is $X\eq Y$. 
If there exists a superset $Y'$ of $X$ 
such that $S_{Y'}-S_Y\in\Ik$, then 
$S_{Y'}-S_X=S_{Y'\setminus X}\in\Ik\cap\RP$, contradiction. Similarly there 
exists no superset $X'$ of $Y$ equinumerous to $X$.

Now  assume that $S_X-S_Y\not\in\Ik$, that is $X\not\eq Y$; then 
there exists a series $P\in\RP$ such that either $S_X-S_Y+P$ or $S_X-S_Y-P$ 
belongs to $\Ik$, by the third property of  gauge ideals.
Assume without loss of generality that $S_X-S_Y+P\in\Ik$. Then
 $P$ is 
equivalent modulo $\Ik$ to the characteristic series of a set $Z$ 
that we can choose disjoint from $X$, by Proposition \ref{equivalenza}.
Hence 
 $S_{X\cup Z}-S_Y=S_X-S_Y+S_Z\in\Ik$, whence $(X\cup Z)\eq Y$. 
 
 \smallskip\noindent
\item[\TP]  $S_X$ is obviously equal to $S_{T[X]}$ whenever
 $T$ is a transformation that permutes the components of the tuples in $X$.
 Hence $X\eq T[X]$.
 
 \smallskip\noindent
\item[\UP] For any set $X\in\WW$, we have
$S_{X\times\{n\}}-S_X=S_X\cdot(t_{n}-1)\in\Ik_{0}\incl\Ik$, \ie\ 
$X\times\{n\}\eq X$.

\smallskip\noindent
\item[\PP] Let $X,Y$ and $X',Y'$ be multipliable point sets.
 If $S_X-S_{X'}$ and 
$S_Y-S_{Y'}$ belong to $\Ik$, that is $X\eq X'$ and $Y\eq Y'$, then 
$$S_{X\times Y}-S_{X'\times Y'}=
(S_{X}-S_{X'})\cdot S_{Y}+S_{X'}\cdot (S_{Y}-S_{Y'})\in\Ik,$$  hence 
$X\times Y\eq X'\* Y'$.
\end{description}
Conversely, given an equinumerosity relation $\eq$ over $\WW$, let 
$\Ik$ be the ideal of $\RR$ generated by the 
set $\{S_X-S_Y\mid X\eq Y\}\cup\{t_0-1\}$. We prove that $\Ik$
is a gauge ideal.

First observe that the ideal $\Ik_0$ is also generated by 
$\{t_n-t_0\mid n\in\N\}\cup\{t_0-1\}$,
  so  $\Ik_0$ is contained in $\Ik$, because 
 $t_n-t_0=S_{\{n\}}-S_{\{0\}}$.
 
 By Proposition \ref{equivalenza}, given a series $S\in\RR$, there exist two 
 sets $X$ and $Y$
of $\WW$, such that $S$ is equivalent modulo $\Ik$ to $S_X-S_Y$.
If $S\notin\Ik$, then $X\not\eq Y$, so, without loss of generality,
we may assume that there exists a superset 
$X'$ of $Y$ such that $X\eq X'$,  that is $S_X-S_{X'}\in\Ik$,  by \ZP. 
Then we can write 
$S_X-S_{X'}=S_X-S_{X'\setminus Y}-S_{Y}\in\Ik$ and hence 
$S_X-S_{Y}$ is congruent modulo $\Ik$ to $S_{X'\setminus Y}\in\RP$.

It remains to prove that $\RP\cap\Ik=\emptyset$. We need the 
following fact:

\begin{claim}
For any $S\in\Ik$ there exist $h,k\in\N$ and finitely many (not necessarily 
distinct) sets  $Z_1$, $W_1$, 
$\ldots$, $Z_m$, $W_m$ of $\WW$, such that 
$Z_1\eq W_1$, $\ldots$, $Z_m\eq W_m$ and 
$$t_{0}^{h}t_{1}^{k}S=(S_{Z_1}-S_{W_1})+\ldots+
(S_{Z_m}-S_{W_m}).$$
\end{claim}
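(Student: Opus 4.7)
I plan to prove the claim by showing $\Ik\subseteq\Ik^{*}$, where
\[
\Ik^{*} := \bigl\{\,S\in\RR \mid \exists\, h,k\in\N,\ t_0^{h} t_1^{k} S = \textstyle\sum_{j} (S_{Z_j}-S_{W_j}) \text{ with } Z_j\eq W_j\in\WW\,\bigr\}.
\]
Since $\Ik$ is the ideal generated by $\{S_X-S_Y:X\eq Y\}\cup\{t_0-1\}$, this reduces to showing that $\Ik^{*}$ is an ideal of $\RR$ containing both generator families. The generators lie in $\Ik^{*}$: $S_X-S_Y$ with $X\eq Y$ trivially (take $h=k=0$), and $t_0-1$ because $t_0(t_0-1)=S_{\{(0,0)\}}-S_{\{(0)\}}$ expresses it as a difference of two equinumerous singletons (Proposition~\ref{fin}). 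Closure under addition and negation is immediate by unifying exponent pairs via extra factors of $t_0,t_1$: \UP\ (cf.\ Proposition~\ref{copy}) guarantees that each padded set $Z(0^{a},1^{b})$ remains equinumerous to $Z$, and negation is symmetry of $\eq$.

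The substantive step is closure of $\Ik^{*}$ under multiplication by an arbitrary $R\in\RR$. Given $S\in\Ik^{*}$ with $t_0^{h}t_1^{k}S=\sum_{j}(S_{Z_j}-S_{W_j})$, I handle each summand $R(S_{Z_j}-S_{W_j})$. Applying Lemma~\ref{equivalenza} to the positive and negative parts of $R$ separately, I pick $H,K$ (uniform over the finitely many terms) large enough that $R\cdot t_0^{H}t_1^{K}=S_A-S_B$ for some $A,B\in\WW$ with $A\cap B=\emptyset$ (their tuples live over the disjoint monomial supports of the positive and negative parts of $R$). Then
\[
R\,t_0^{H}t_1^{K}(S_{Z_j}-S_{W_j})=(S_A-S_B)(S_{Z_j}-S_{W_j})
\]
expands into a signed sum of four products $S_U S_V$; the identity $S_U S_V=S_{U\times V}$ holds exactly when $(U,V)$ is a multipliable pair, which is the main obstacle.

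To overcome it, one pads further by a monomial $t_0^{H'}t_1^{K'}$ and, using \UP\ and \TP, replaces $Z_j,W_j$ by equinumerous copies $Z_j^{*},W_j^{*}$ structured so that all four pairs $(A,Z_j^{*}),(A,W_j^{*}),(B,Z_j^{*}),(B,W_j^{*})$ become multipliable—appending long blocks of $0$'s and $1$'s, then reordering components (via \TP) so that distinctive segments pin down a unique decomposition of every concatenation. Finitariness of $A,B$—only finitely many tuples with components below any given bound—permits a single choice of $H',K'$ to work for all finitely many pairs simultaneously. Once multipliability holds,
\[
(S_A-S_B)(S_{Z_j^{*}}-S_{W_j^{*}})=(S_{A\times Z_j^{*}}-S_{A\times W_j^{*}})+(S_{B\times W_j^{*}}-S_{B\times Z_j^{*}}),
\]
and the Product Principle \PP, applied with $A\eq A$, $B\eq B$, $Z_j^{*}\eq W_j^{*}$, yields $A\times Z_j^{*}\eq A\times W_j^{*}$ and $B\times W_j^{*}\eq B\times Z_j^{*}$; each parenthesized summand is therefore a difference of equinumerous sets. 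Summing over $j$ proves $RS\in\Ik^{*}$, completing the closure argument and the proof of the claim.
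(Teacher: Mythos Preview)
Your approach is correct and follows essentially the same route as the paper: decompose the coefficient series into characteristic series, pad by powers of $t_0,t_1$ to force multipliability (Proposition~\ref{copy}), and invoke \PP\ so that each resulting product of characteristic series is again a difference $S_Z-S_W$ with $Z\eq W$. The only differences are organizational: the paper expands an arbitrary element of $\Ik$ directly in terms of the generators, keeping each coefficient $S_i$ as a signed sum $a_i+\sum S_{X_{ji}}-\sum S_{Y_{li}}$, whereas you package the argument as ``$\Ik^{*}$ is an ideal'' and collapse $R\,t_0^{H}t_1^{K}$ to a single $S_A-S_B$. That collapse is valid --- for $H,K$ large enough the coefficient bound of Remark~\ref{condizione} is satisfied uniformly (using that $R$ has bounded degree in $t_0,t_1$) --- but note that this is a bit more than what Lemma~\ref{equivalenza} actually states, so you should justify it separately. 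Finally, the padding from Proposition~\ref{copy} already secures multipliability of all four pairs; the extra appeal to \TP\ for ``reordering components'' is unnecessary.
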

\proof
By definition of $\Ik$, there exist $S_0$, $\ldots$, $S_n\in\RR$ such that 
$$S=S_0\cdot (t_0-1)+S_1\cdot(S_{X_1}-S_{Y_1})
+\ldots+S_n\cdot(S_{X_n}-S_{Y_n}),$$
with $X_1\eq Y_1$, $\ldots$, $X_n\eq Y_n$.

According to 
Proposition \ref{equivalenza}, there exist $a_{i}\in\Z$ and 
$X_{1i},\ldots,X_{si};Y_{1i},\ldots,$ $Y_{ti}\in\WW$ such that
$$S_i=a_i+S_{X_{1i}}+\ldots+S_{X_{si}}-S_{Y_{1i}}-\ldots-S_{Y_{ti}}.$$
Then we have 
\begin{equation*}
\begin{split}
S_i\cdot(S_{X_i}-S_{Y_i}) &=a_i\cdot(S_{X_i}-S_{Y_i})+S_{X_{1i}}
\cdot(S_{X_i}-S_{Y_i})+\ldots+S_{X_{si}}\cdot(S_{X_i}-S_{Y_i})\\ 
                          &\ \ \ -S_{Y_{1i}}\cdot(S_{X_i}-S_{Y_i})-\ldots
			  -S_{Y_{ti}}\cdot(S_{X_i}-S_{Y_i}).  
                          \end{split}
\end{equation*}

By Proposition \ref{copy} we can find $h,k\in\N$ such that all sets 
$$U_{ji}=X_{ji}\*\{0\}^{h}\*\{1\}^{k},V_{li}=Y_{li}\*\{0\}^{h}\*\{1\}^{k}$$ 
are multipliable with both $X_{i}$ and $Y_{i}$.

For all $i>0$  we have 
\begin{equation*}
\begin{split}
t_{0}^{h}t_{1}^{k}S_i(S_{X_i}-S_{Y_i})=& \ a_it_{0}^{h}t_{1}^{k}(S_{X_i}-S_{Y_i})+S_{U_{1i}}(S_{X_i}-S_{Y_i})+
\ldots+S_{U_{si}}(S_{X_i}-S_{Y_i})\\
& -S_{V_{1i}}(S_{X_i}-S_{Y_i})-\ldots-S_{V_{ti}}(S_{X_i}-S_{Y_i})\\
=& \ a_i(S_{\{0\}^{h}\*\{1\}^{k}\*X_i}-S_{\{0\}^{h}\*\{1\}^{k}\*Y_i})+
 (S_{U_{1i}\times X_i}-S_{U_{1i}\times Y_i})\\ 
&+\ldots+(S_{U_{si}\times X_i} 
 -S_{U_{si}\times Y_i})-(S_{V_{1i}\times X_i}-S_{V_{1i}\times Y_i})-
 \ldots\\
&-(S_{V_{ti}\times X_i}-S_{V_{ti}\times Y_i}).
\end{split}
\end{equation*}
Similarly, observing that 
$t_0-1=S_{\{0\}}-1$ we obtain for $i=0$\\
\begin{equation*}
\begin{split}
t_{0}^{h}t_{1}^{k}S_0(t_{0}-1)=&
a_0(S_{\{0\}^{h+1}\*\{1\}^{k}}-S_{\{0\}^{h}\*\{1\}^{k}})+
 (S_{U_{10}\times \{0\}}-S_{U_{10}})+
 \ldots\\ 
&+(S_{U_{s0}\times \{0\}} 
  -S_{U_{s0}})-(S_{V_{10}\times \{0\}}-S_{V_{10}})-
 \ldots-(S_{V_{t0}\times \{0\}}-S_{V_{t0}}).
\end{split}
\end{equation*}
 (If $a_{0}>0$ we take $a_{0}$ times the difference 
 $S_{\{0\}^{h+1}\*\{1\}^{k}}-S_{\{0\}^{h}\*\{1\}^{k}}$, whereas if 
 $a_{0}<0$ we take $-a_{0}$ times the difference 
 $S_{\{0\}^{h}\*\{1\}^{k}}-S_{\{0\}^{h+1}\*\{1\}^{k}}$.)

\smallskip\noindent
By \PP\ we have $U_{ji}\times X_i\eq U_{ji}\times Y_i$
and $V_{ji}\times X_i\eq V_{ji}\times Y_i$ for all $j$, and obviously 
$\{0\}^{h}\*\{1\}^{k}\*X_i\eq \{0\}^{h}\*\{1\}^{k}\*Y_i$. 
Hence, by taking $Z_{1},\ldots,Z_{m}$ and $W_{1},\ldots,W_{m}$
to be an enumeration of the sets $U_{ji}\times X_i,V_{ji}\times X_i,
{\{0\}^{h+1}\*\{1\}^{k}}$ 
and $U_{ji}\times Y_i,V_{ji}\times Y_i,{\{0\}^{h}\*\{1\}^{k}}$, respectively,
we obtain the claim. 
\qed

\bigskip
Clearly, $t_{0}^{h}t_{1}^{k}S\in\RP$ if and only if $S\in\RP$.
Towards a contradiction, assume that $S$ be in $\Ik\cap\RP$.
By the Claim, there exist $h,k\in\N$ such that  
 $t_{0}^{h}t_{1}^{k}S=(S_{Z_1}-S_{W_1})+\ldots+
(S_{Z_m}-S_{W_m}),$ for suitable $Z_{i}\eq W_{i}$.

Since $S$ belongs to $\RP$,  each $W_i$ can be 
decomposed into pairwise disjoint subsets: $W_i=\bigcup_{j=1}^m W_{ij}$, 
in such a way that 
 $W_{ij}\incl Z_j$.
Put $P=(1,\ldots,m)$ and let $\sg_{1},\ldots,\sg_{m}$ be different 
permutations of $\{1,\ldots,m\}$. Put $X_{i}=\{\sg_{i}(P)\}\* Z_{i}$
and $Y_{i}=\{\sg_{i}(P)\}\* W_{i}$. Then we have 
$t_{P}\cdot S_{Z_i}=S_{X_i}$ and 
$t_{P}\cdot S_{W_i}=S_{Y_i}$, where $X_1,\ldots,X_m,
$ and $Y_1,\ldots,Y_{m},$ are pairwise disjoint sets of $\WW$. 

Put $X=\bigcup_{i=1}^mX_i$ and $Y=\bigcup_{i=1}^m Y_i$.
Then the series  $$t_{P}t_{0}^{h}t_{1}^{k} S=
(S_{X_1}-S_{Y_1})+\ldots+(S_{X_m}-S_{Y_m})=(S_X-S_Y)$$
is still in
$\RP\cap\Ik$.
 By \UP\ 
we have $Z_i\approx X_i\approx Y_i\approx W_i$,  hence $X\approx Y$, by \SP.

For each monomial $t^\bb$ of the series $S_Y$ there
exists a point $Q\in Y_i$ such that $t^\bb=t_{Q}$. Note that 
$t_{Q}=t_{P}t_{Q_{i}}$, where $Q_{i}$ is a point in ${W_i}$, so,
by hypothesis, $t_{Q_{i}}$ is also a monomial of some series $S_{Z_j}$.
Hence there exists a permutation of the coordinates of $Q$ such that the 
corresponding point $Q'$
belongs to $X_j\subset X$.
Repeating this argument for each monomial of the series $S_Y$, we 
can conclude that there exists a transformation $T$ of $Y$, such that 
$T[Y]\subseteq X$ and $T[Y]\approx Y\approx X$, hence $X=T[Y]$.
Then we have $S_X-S_Y=S_X-S_{T[Y]}=0$ and so $(S_{X_1}-S_{Y_1})+
\ldots+(S_{X_m}-S_{Y_m})=0$, 
and this implies $S=0$, absurd. So the proof that $\RP\cap\Ik=\0$ is 
complete.

\smallskip
Given an equinumerosity relation $\eq$, let 
$\varphi(\approx)$ be tha gauge ideal generated by the set $\{S_X-S_Y\mid
 X\approx Y\}\cup\{t_0-1\}$.
 The map
$$\varphi\colon\{\eq\,\mid\ \eq\, \mbox{equinumerosity relation of}\ \WW\}
\rightarrow\{\Ik\,\mid\,\Ik\,\mbox{gauge ideal of}\ \RR\}$$
 is iniective: in fact let $\approx_1$
 and $\approx_2$ be two different equinumerosity relations, then
there exist two sets $X,Y$ of $\WW$, such that $X\approx_1 Y$ 
and $X\not\approx_2 Y$. Assume without loss of generality 
that there exists a proper superset $Y'$ of $X$ such that 
$Y'\approx_2 Y$. Put $\varphi(\approx_1)=\Ik_1$ and 
$\varphi(\approx_2)=\Ik_2$: then we have $S_X-S_Y\in\Ik_1$ and 
$S_{Y'}-S_Y\in\Ik_2$. If $S_X-S_Y$ belongs to $\Ik_2$, then 
the series $S_{Y'}-S_{X}=S_{Y'\setminus X}$ is an element of 
$\RP\cap\Ik_2$, absurd; so $\Ik_1\neq\Ik_2$ and $\fg$ is $1$-to-$1$.

Given a gauge ideal $\Ik$, consider the equinumerosity 
relation 
$\approx$ defined by $\Ik$ through (\ref{**}). Putting $\varphi(\approx)=
\Ik_{\approx}$, we have obviously $\Ik_{\approx}\subseteq\Ik$, 
but, if  there exists 
$S\in\Ik\setminus\Ik_{\approx}$, then there exists $P\in\RP$ such 
that either $S+P$ or $S-P$ belongs to $\Ik_{\approx}$. Without loss of generality
suppose that $S+P\in\Ik_{\approx}$: then $S+P\in\Ik$ and so $P\in\Ik$,
 absurd. Therefore $\Ik_{\approx}=\Ik$ and  $\varphi$ is biunique.

Fixed an equinumerosity relation $\approx$ and the corrisponding gauge 
ideal $\Ik$,  define  $j:\Nk\rightarrow\RR/\Ik$ 
by $j(\nk(X))=S_X+\Ik$.
The application $j$ is well-defined, because, whenever $X,Y$ are two equinumerous 
sets of $\WW$,  we have $S_X-S_Y\in\Ik$,  hence 
$j(\nk(X))=j(\nk(Y))$.
Moreover, if $j(\nk(X))=j(\nk(Y))$, 
that is $S_X-S_Y\in\Ik$, then $\nk(X)=\nk(Y)$, hence 
$j$ is a embedding of $\Nk$ into the non-negative part of $\RR/\Ik$.
The range of $j$ is exactly $(\RP+\Ik)/\Ik$ because each element of 
$\RP$ is congruent modulo $\Ik$ to a characteristic series.

We prove that $j$ induces an ordered semiring structure on $\Nk$. 
Given two disjoint sets $X,Y$ of $\WW$,  define 
$\nk(X)+ \nk(Y)=j^{-1}(j(\nk(X)+j(\nk(Y)))$: 
then  $$\nk(X)+ \nk(Y)=j^{-1}(S_X+S_Y+\Ik)=j^{-1}(S_{X\cup Y}+\Ik)=
\nk(X\cup Y).$$
Similarly, given two multipliable sets $X,Y$ of $\WW$,  define 
$\nk(X)\cdot \nk(Y)=j^{-1}(j(\nk(X)\cdot j(\nk(Y)))$: 
then  $$\nk(X)\cdot \nk(Y)=j^{-1}(S_X\cdot S_Y+\Ik)=j^{-1}(S_{X\times Y}+\Ik)=
\nk(X\times Y).$$
Sum and product are well defined by  Proposition 
\ref{copy}.

 If $X,Y\in\WW$ are  not equinumerous, \ie\
 $S_X-S_Y\in\RR\setminus\Ik$, then there 
exists a set $Z\in\WW$ such that either 
$S_X-S_Y+S_Z$ or $S_X-S_Y-S_Z$ belongs to $\Ik$, 
by the third propriety of gauge ideals.
So define a total order $<$ on $\Nk$ by putting
$$\nk(X)<\nk(Y)\ \Iff\  \exists Z\in\WW,\, Z\ne\0\ \mbox{such that}\ 
    S_X-S_Y+S_Z\in\Ik.$$

Note that, by  Proposition \ref{equivalenza}, if $\nk(X)<\nk(Y)$ we can 
always choose the set 
$Z$ so that $X$ and $Z$ are disjoint and hence 
$S_X-S_Y+S_Z=S_{X\cup Z}-S_Y\in\Ik$.
Therefore $\nk(X)<\nk(Y)$ holds if and only if there exists a proper superset 
$Y'$ of $X$ such that $Y'\approx Y$. By  Proposition \ref{preorder} we 
conclude that the relation $<$ is a total order on $\Nk$ and hence 
$(\Nk,+,\cdot,<)$ is a ordered semiring.
\qed

\begin{remark}
    In \cite{tesi} a class of particular equinumerosity relations has 
    been considered, namely those which are preserved under 
    ``natural transformations'', \ie\  bijections that preserve the
    \emph{support} (set of 
components) of each tuple. 
   
   Let us call  an equinumerosity relation \emph{natural} if it 
   satisfies the following Natural Transformation Principle, which is 
   a severe strengthening of \TP:
   \begin{itemize}     
         \item[(\NP)] If $T$ is 1-to-1 on $X\in\WW$ and $supp(T(x))=supp(x)$ 
	 for all $x\in X$, then
	 {$X\eq T[X]$. }
    \end{itemize}
   
   For $S\in\RR$ let $S'$ be the  \emph{associated squarefree 
   series}, \ie\ the series obtained by replacing each monomial in 
   $S$ by the corresponding squarefree monomial and summing up the 
   corresponding coefficients. \Ie 
   $$\mbox{if}\ \  S=\sum_{\aa\in\lA}n_{\aa}t^{\aa}\ \ \mbox{then}\ \
   S'=\sum_{F\in\N^{<\og}}(\sum_{supp(\aa)=F}n_{\aa})\,t_{F}$$
   where 
   \begin{center}
       $supp(\aa)=\{n\in\N\mid a_{n}\ne 0\}\ $ and\ $\ t_{F}=\prod_{n\in 
          F}t_{n}$.
   \end{center}
   
\smallskip
Let  $\Ik_{1}$ be the kernel of the map $S\mapsto S'$, \ie\ the ideal of $\RR$ 
   generated by the set $\{S-S'\,\mid\,S\in\RP\}$.
   Then we have
   
   \begin{proposition}
       The equinumerosity relation $\eq$ is natural if and only if 
       the corresponding gauge ideal $\Ik$ includes $\Ik_{1}$.
   \end{proposition}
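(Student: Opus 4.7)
The plan is to prove the biconditional via the correspondence $X\eq Y\iff S_X-S_Y\in\Ik$ of Theorem~\ref{ser}, noting that $\Ik_{1}$ is the kernel of the $\Z$-linear squarefree map $\sigma\colon\RR\to\RR$, $S\mapsto S'$. Throughout, write $X_F=\{x\in X:supp(x)=F\}$, so $|X_F|$ is the coefficient of $t_F$ in $(S_X)'$.

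The easier direction ($\Ik\supseteq\Ik_{1}$ implies naturality) goes as follows. Given a 1-1 $T$ on $X$ with $supp(T(x))=supp(x)$ for all $x$, the map $T$ restricts to a bijection $X_F\to T[X]_F$ for each $F$, so $|X_F|=|T[X]_F|$. Thus $(S_X-S_{T[X]})'=\sum_F(|X_F|-|T[X]_F|)t_F=0$, placing $S_X-S_{T[X]}$ in $\ker\sigma=\Ik_{1}\subseteq\Ik$, which by Theorem~\ref{ser} gives $X\eq T[X]$.

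For the forward direction (assume $\eq$ natural), since $\Ik_{1}$ is generated as an ideal by $\{S-S':S\in\RP\}$, it suffices to show each such generator lies in $\Ik$. A direct monomial-wise calculation shows $\sigma(\Ik_{0})\subseteq\Ik_{0}$: indeed $\sigma(r(t_n-1))=(t_n-1)\cdot g_n(r)$, where $g_n(r)$ collects the squarefree projections of those monomials of $r$ whose support misses $n$. By Lemma~\ref{equivalenza} one writes $S\equiv S_X\pmod{\Ik_{0}}$ for some $X\in\WW$, and the preceding observation then yields $S-S'\equiv S_X-(S_X)'\pmod{\Ik_{0}}$. It therefore suffices to prove $S_X-(S_X)'\in\Ik$ for every $X\in\WW$. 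The strategy is to exhibit $Y\in\WW$ with $|Y_F|=|X_F|$ for every $F$ and $S_Y=(S_X)'$: matching $X_F\leftrightarrow Y_F$ bijectively is support-preserving, so by naturality $X\eq Y$, whence $S_X-(S_X)'=S_X-S_Y\in\Ik$. When $|X_F|\le|F|!$ for every $F$, this $Y$ is built by taking $Y_F$ to be $|X_F|$ distinct permutations of the sorted enumeration of $F$, so that $t_y=t_F$ for every $y\in Y_F$.

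The main obstacle is handling $X$ for which $|X_F|>|F|!$ for some $F$, where $(S_X)'$ is not a characteristic series of any set. My proposed remedy is iteratively applying the Unit Principle: replace $X$ by $X\cdot\{P\}$ for a singleton whose tuple $P$ has distinct entries, disjoint from every entry appearing in $X$. Then $X\cdot\{P\}\eq X$ and each support is enlarged by $supp(P)$, relaxing the constraint to $|X_F|\le(|F|+|supp(P)|)!$. Since $|X_F|$ can grow unboundedly with $F$, no single $P$ suffices uniformly, so I expect the final argument to stratify $X$ using the admissibility function $h_n$ witnessing its membership in $\WW$, treating each stratum separately and patching via the Sum Principle. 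Making this patching compatible within the ring-theoretic framework of $\RR$ and $\Ik$, where only finite $\RR$-linear combinations are allowed in forming ideal elements, is the most delicate technical point of the proof.
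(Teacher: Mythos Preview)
Your easy direction ($\Ik\supseteq\Ik_{1}\Rightarrow$ natural) is correct and coincides with the paper's argument: $T$ support-preserving forces $(S_X)'=(S_{T[X]})'$, hence $S_X-S_{T[X]}\in\ker\sigma=\Ik_{1}\subseteq\Ik$.

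For the converse you have correctly isolated the obstruction, but the proposal does not close it. After collapsing $S$ to a single $S_X$ via Lemma~\ref{equivalenza}, you need $(S_X)'$ to be a characteristic series, which fails precisely when $|X_F|>|F|!$; your remedy (iterate \UP\ and stratify) is left as a hope. It is a genuine gap: multiplying by a singleton $\{P\}$ translates \emph{every} support by the fixed set $supp(P)$, so it cannot simultaneously repair infinitely many $F$ whose defect $|X_F|/|F|!$ is unbounded, and an infinite stratification of $X$ does not assemble into a single element of the ideal $\Ik$.

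The paper's argument avoids the obstacle by never collapsing to one set. Given $S\in\RP$, it writes
\[
S \;=\; a + S_{X_1} + \cdots + S_{X_n}
\]
with finitely many (not necessarily distinct) $X_i\in\WW$, taking care that in each $X_i$ the number of tuples of any fixed support $F$ is at most $|F|!$. Under that constraint each squarefree series $(S_{X_i})'$ \emph{is} the characteristic series of some $Y_i\in\WW$ (take $|X_{iF}|$ distinct permutations of the increasing enumeration of $F$), so a support-preserving bijection $T_i\colon X_i\to Y_i$ exists. Naturality gives $X_i\eq Y_i$, hence $S_{X_i}-(S_{X_i})'=S_{X_i}-S_{Y_i}\in\Ik$, and summing over $i$ yields $S-S'\in\Ik$.

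So the missing idea is simply this: do not push all of $S$ into a single $X$ and then try to repair it; instead distribute the monomials of $S$ among several characteristic series, each already thin enough on every support that its squarefree reduction remains characteristic. Your detour through $\sigma(\Ik_{0})\subseteq\Ik_{0}$ (which is correct) and the single-set reduction is then unnecessary.
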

\proof
Let $\Ik$ be a gauge ideal that includes $\Ik_1$. We prove that 
the equivalence relation defined by the relation
\begin{equation*}
X\approx Y \Iff\ S_X-S_Y\in\Ik 
\end{equation*} 
is a natural equinumerosity.
The principles \AP, \ZP, \UP, \PP\ hold because the ideal $\Ik$ is 
gauge, hence we have only to prove that if $T$ is a natural transformation of 
a set $X\in\WW$ 
then $X\approx T[X]$, or equivalently $S_{X}-S_{T[X]}\in\Ik$.

Let $S'_X$ and $S'_{T[X]}$ be 
the  squarefree series associated to the series $S_X$ and $S_{T[X]}$, 
respectively. We prove that $S'_X=S'_{T[X]}$, so
 the series $S_{X}-S_{T[X]}=S_X-S'_X+S'_{T[X]}-S_{T[X]}$ belongs 
to $\Ik_{1}\incl\Ik$.
Let $n_\aa t^\aa$ be a (squarefree) monomial of the series $S'_{X}$, 
then there exist distinct points 
$P_1,\ldots,P_{n_\aa}$ of the set $X$ that produce the monomial $t^\aa$. 
Now consider the points $T(P_1),\ldots,T(P_{n_\aa})$, that belong to the set $T[X]$. 
The map $T$ preserves the support of each point, hence in the series $S'_{T[X]}$ 
the monomial $t^\aa$ appears with a coefficent equal to $n_\aa$ 
because all and only $T(P_1),\ldots,T(P_{n_\aa})$ can have this 
support. So
we  conclude that $S'_{T[X]}=S'_{X}$.

Conversely, given a natural equinumerosity $\approx$ over $\WW$, let $\Ik$ be 
the ideal generated by the set $\{S_X-S_Y\mid X\approx Y\}\cup\{t_0-1\}$. 
The ideal $\Ik$ is gauge, because $\approx$ is an equinumerosity relation, 
so we need only to prove that 
$\Ik$ includes the ideal $\Ik_1$ generated by 
the set $\{S-S'\,\mid\,S\in\RP\}$.

Let $S$ be a series of $\RP$, and pick (not necessarily distinct) sets
$X_1,\ldots,X_n\in\WW$ such that 
$S=a+S_{X_1}+\ldots+S_{X_n}$, taking care that, in each set  $X_i$, 
the tuples of each support of size $k$ be at 
most $k!$. 
Then each squarefree series $S'_{X_i}$ is equal to the characteristic series of 
a set $Y_i\in\WW$, and we can define  natural tranformations 
$T_i\colon X_i\rightarrow  Y_i$ in such a way that
  $T_i(x)$ is a permutation of the support of $x$, for all $x\in X_i$.
  Hence  
$$S'=a+S'_{X_1}+\ldots+S'_{X_n}=a+S_{Y_1}+\ldots+S_{Y_n}=
a+S_{T_{1}[X_1]}+\ldots+S_{T_{n}[X_n]},$$
 and we  conclude that the series 
$S-S'$ belongs to $\Ik$.

\qed

\medskip
The very same proof of Theorem \ref{ser} can be used to
prove that there exists a biunique corrispondence between \emph{natural 
equinumerosities} and 
\emph{gauge ideals $\Ik$ including} $\Ik_1$, and that there exists 
a unique order preserving embedding $j$ of the set $\Nk$ of the
natural numerosities 
 corrisponding to  the ideal $\Ik$ onto 
the non-negative part of $\RR/\Ik$. In the next section we shall give a 
complete characterization of all natural numerosities as 
\emph{hypernatural numbers} 
from suitable ultrapowers of $\N$.
   
 \end{remark}  
   
   \section{Equinumerosities  through ultrafilters}\label{ult}
We give in this section a construction of equinumerosity relations 
through suitable ultrafilters.

\begin{definition}\label{seq}
    {Let $\XX$ be the set of all sequences  
of non-negative real numbers $\xb=\la x_{0},\ldots,x_{n},\ldots\ra$ 
such that the series $\sum x_{n}=\|\xb\|$ converges.\\
For $S\in \RR$\ let
$ S(\xb)$ be the value taken by $S$ when $x_{n}$ is assigned 
to the variable $t_{n}$. (So \pes\ $\|\xb\|^{d}=S_{\N^{d}}(\xb).)$}
   \begin{itemize}
 
         \item  {A subset $\II\incl \XX$ is
  a  \emph{counting set} (of assignements) if } 
      {for all $k\in\N$ the set}\ \   
   $\II_{k}= \{\ib\in\II \mid  i_{0}=i_{1}=\ldots =i_{k}=1\ \}\ne\0;$
         
   \smallskip
         \item {an ultrafilter $\U$ on the counting set $\II$ is 
        \emph{suitable for} $\II$ 
 if for all $k\in\N$ the set $\II_{k}$ is in $\U$;} 
   \smallskip
\item {the counting set $\II$  is \emph{suitable
for} the ideal $\Ik$ of $\RR$
if for all $S\in\Ik$ and all $k\in\N$ there exists 
$\ib\in \II_{k}$ such that $S(\ib)=0$.}

   \end{itemize} 
  
\end{definition}

Let $\II$ be a counting set,
and 
define the \emph{counting map} $$\Phi:\RR\to \R\ult{\II}{}\ \
\mbox{by}\ \
 \Phi(S)=\la S(\ib)\ra_{\ib\in \II}. $$

Then $\Phi$ is a ring homomorphism that preserves the respective partial 
orderings. 

If $\U$ is an ultrafilter suitable for $\II$ and 
$\pi_{\U}:\R^{\II}\to\R\ult{\II}{\U}$ is the 
natural projection onto the corresponding ultrapower, put 
$\phi_{\U}=\pi_{\U}\circ\Phi$, so that
 $$
 \phi_{\U}(S)=[\la S(\ib)\ra_{\ib\in \II}]_{\U}. $$
 Then $\phi_{\U}$
is a ring homomorphism whose kernel $\Ik=\ker\phi_{\U}$ is a prime 
ideal of $\RR$ that includes $\Ik_{0}$ and is disjoint from $\RP$.
Moreover the 
counting set $\II$
turns out to be \emph{suitable for} the ideal 
$\Ik$.

The map 
   $X\mapsto \phi_{\U}(S_{X})$ induces an equivalence
   relation between point sets
   $$X\equ Y\ \ \Iff\ \ \{\ib\in\II \mid S_{X}(\ib)= 
      S_{Y}(\ib)\}\in\U$$
that satisfies all conditions of  an equinumerosity relation but 
possibly Zermelo's Principle \ZP.

If the kernel $\Ik=\ker\phi_{\U}$ is a gauge 
ideal,  then 
$\equ$ is an equinumerosity relation 
 whose set of numerosities is 
(isomorphic to) a discrete semiring of hyperreal numbers, namely
\emph{a subsemiring  of the non-negative part of the ultrapower}
$\R\ult{\II}{\U}$. A simple 
rephrasing of the definition gives

\begin{proposition}\label{real}
The equivalence $\equ$ is an equinumerosity, or equivalently   
$\Ik=\ker\phi_{\U}$ is
a gauge ideal  of $\RR$, if and only if, for all $S\in\RR$,
$$\{ \ib\in\II\,\mid\, S(\ib)>0\,\}\in\U\ \ \Iff\ \ \exists P\in\RP 
\mbox{s.t.}\ \{ \ib\in\II\,\mid\, S(\ib)=P(\ib)\,\}\in\U.$$
\qed
  
\end{proposition}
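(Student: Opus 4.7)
The plan is to observe that the preamble to the proposition already establishes the first two gauge conditions for $\Ik=\ker\phi_{\U}$ (containment of $\Ik_{0}$ and disjointness from $\RP$), so only the third gauge condition—the trichotomy for elements outside $\Ik$—needs to be matched against the stated biconditional. I will translate everything into ultrafilter language via the basic identifications $S\in\Ik\Iff\{\ib:S(\ib)=0\}\in\U$, $S-P\in\Ik\Iff\{\ib:S(\ib)=P(\ib)\}\in\U$, and $S+P\in\Ik\Iff\{\ib:S(\ib)=-P(\ib)\}\in\U$, and then argue both implications of the proposition separately.

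The crucial technical observation, used throughout, is that every $P\in\RP$ is \emph{strictly positive on a $\U$-large set}. Indeed, pick any monomial $t^{\aa_{0}}$ in $P$ with coefficient $n_{\aa_{0}}>0$ and let $k_{0}$ exceed all indices in the support of $\aa_{0}$; for $\ib\in\II_{k_{0}}$ we have $t^{\aa_{0}}(\ib)=1$ and all other monomials are non-negative, so $P(\ib)\ge n_{\aa_{0}}>0$. Since the ultrafilter is suitable for $\II$, $\II_{k_{0}}\in\U$, hence $\{\ib:P(\ib)>0\}\in\U$.

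For the ``only if'' direction, assuming $\Ik$ is gauge, I verify both halves of the displayed equivalence. If $\{\ib:S(\ib)>0\}\in\U$, then $S\notin\Ik$, so by the gauge property there exists $P\in\RP$ with either $S+P\in\Ik$ or $S-P\in\Ik$. The first case is ruled out because $\{\ib:S(\ib)+P(\ib)=0\}\in\U$ forces $S(\ib)=-P(\ib)\le 0$ on a $\U$-large set, contradicting $\{\ib:S(\ib)>0\}\in\U$; hence $S-P\in\Ik$, i.e.\ $\{\ib:S(\ib)=P(\ib)\}\in\U$. Conversely, if $\{\ib:S(\ib)=P(\ib)\}\in\U$ for some $P\in\RP$, intersecting with $\{\ib:P(\ib)>0\}\in\U$ (the key observation above) yields $\{\ib:S(\ib)>0\}\in\U$.

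For the ``if'' direction, assume the equivalence holds for all $S\in\RR$ and take any $S\in\RR\setminus\Ik$. Then $\{\ib:S(\ib)=0\}\notin\U$, so by the ultrafilter dichotomy applied to the two halves of the complement, either $\{\ib:S(\ib)>0\}\in\U$ or $\{\ib:S(\ib)<0\}\in\U$. In the first case the assumed equivalence supplies $P\in\RP$ with $\{\ib:S(\ib)=P(\ib)\}\in\U$, giving $S-P\in\Ik$. In the second case, note that $\{\ib:(-S)(\ib)>0\}\in\U$ and apply the equivalence to $-S$ to obtain $P\in\RP$ with $\{\ib:-S(\ib)=P(\ib)\}\in\U$, i.e.\ $S+P\in\Ik$. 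Either way the third gauge condition is verified, completing the proof. The argument is essentially a bookkeeping exercise—the only genuinely substantive ingredient is the positivity-on-$\II_{k_{0}}$ observation, whose role is to bridge the asymmetric formulations ``$S(\ib)>0$'' and ``$S(\ib)=P(\ib)$''.
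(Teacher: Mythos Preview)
Your proof is correct and essentially matches the paper's approach: the paper treats the proposition as a ``simple rephrasing of the definition'' and supplies no details, so you have written out the computation the authors omit. Your positivity observation for $P\in\RP$ on $\II_{k_0}$ is exactly the content hidden behind the preamble's assertion that $\Ik$ is disjoint from $\RP$, and the remaining dichotomy argument is the obvious unpacking of the third gauge condition in ultrafilter language.
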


Conversely we can state
\begin{theorem}\label{null}
Let $\eq$ be an equinumerosity, and let $\Ik$ be the corresponding 
gauge ideal of $\RR$. Assume that there exists a counting set $\II$ 
suitable for $\Ik$. 
Then  there exists an
ultrafilter $\U$ suitable for
$\II$ such that the equinumerosity $\eq$ coincides with the 
equivalence $\equ$ induced by the counting map $\phi_{\U}$.
   
So the set  of numerosities $\Nk$ of $\eq$ is isomorphic 
to a discrete subsemiring of the non-negative part of the ultrapower 
${\R}\ult{\II}{\U}$. 
  
\end{theorem}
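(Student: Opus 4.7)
The plan is to construct an ultrafilter $\U$ on $\II$ whose ``counting kernel'' $\ker\phi_{\U}$ coincides exactly with $\Ik$; once this is achieved, both conclusions of the theorem follow immediately from Theorem \ref{ser}. Writing $Z(S)=\{\ib\in\II\mid S(\ib)=0\}$, I need $\U$ to satisfy $\II_{k}\in\U$ for every $k\in\N$ (so $\U$ is suitable for $\II$) and $Z(S)\in\U$ for every $S\in\Ik$ (so $\Ik\subseteq\ker\phi_{\U}$); the gauge hypothesis will then force the reverse inclusion $\ker\phi_{\U}\subseteq\Ik$.

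First I would show that the family
$$\F_{0}=\{\II_{k}\mid k\in\N\}\cup\{Z(S)\mid S\in\Ik\}$$
has the finite intersection property. Because $\II_{k+1}\subseteq\II_{k}$, intersecting finitely many sets $\II_{k}$ yields $\II_{K}$ with $K$ the maximum index. For finitely many $S_{1},\dots,S_{n}\in\Ik$, I would form $T=S_{1}^{2}+\cdots+S_{n}^{2}$, which lies in $\Ik$ since $\Ik$ is an ideal; because the values $S_{i}(\ib)$ are real, $Z(T)=\bigcap_{i}Z(S_{i})$. The hypothesis that $\II$ is suitable for $\Ik$ then provides $\ib\in\II_{K}\cap Z(T)$, establishing FIP. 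I extend $\F_{0}$ to an ultrafilter $\U$; by construction $\U$ is suitable for $\II$ and $\Ik\subseteq\ker\phi_{\U}$.

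For the reverse inclusion I would argue by contradiction: suppose $S\in\ker\phi_{\U}\setminus\Ik$. The gauge property of $\Ik$ yields $P\in\RP$ with $S+P\in\Ik$ or $S-P\in\Ik$; in either case $P\in\ker\phi_{\U}$, so $Z(P)\in\U$. But $P\in\RP$ has only positive coefficients: fixing any monomial $c\,t^{\aa}$ of $P$ with $c>0$ and support $F\subseteq\N$, for every $\ib\in\II_{\max F}$ the coordinates indexed by $F$ all equal $1$, so this monomial contributes exactly $c$ to $P(\ib)$ while all other terms are non-negative. Hence $P(\ib)\ge c>0$ on $\II_{\max F}$, contradicting the fact that $Z(P)\cap\II_{\max F}\in\U$ and so is non-empty. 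Therefore $\ker\phi_{\U}=\Ik$. The induced equivalence $X\equ Y\iff S_{X}-S_{Y}\in\ker\phi_{\U}$ then coincides with the relation $X\eq Y\iff S_{X}-S_{Y}\in\Ik$ of Theorem \ref{ser}, and $\phi_{\U}$ descends to an order-preserving ring embedding $\RR/\Ik\hookrightarrow\R\ult{\II}{\U}$; composing with the isomorphism $j:\Nk\to(\RR/\Ik)^{+}$ of Theorem \ref{ser} identifies $\Nk$ with a discretely ordered subsemiring of the non-negative part of $\R\ult{\II}{\U}$.

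I expect the main obstacle to be the reverse inclusion $\ker\phi_{\U}\subseteq\Ik$. The FIP step is nearly mechanical once one notices the trick of replacing $S_{1},\dots,S_{n}\in\Ik$ by the single element $\sum S_{i}^{2}\in\Ik$; but ruling out $\RP\cap\ker\phi_{\U}\neq\emptyset$ uses in an essential way both the positivity of coefficients of series in $\RP$ and the precise definition of the counting sets $\II_{k}$ -- it is exactly the condition that each $\II_{k}$ sits in $\U$ that forces every $P\in\RP$ to evaluate strictly positively on a $\U$-large set.
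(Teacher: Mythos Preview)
Your proof is correct and follows essentially the same route as the paper's: form a filter base from the zero-sets $Z(S)$ for $S\in\Ik$ (using the sum-of-squares trick $Z(S_1)\cap\cdots\cap Z(S_n)=Z(\sum S_i^{2})$ for FIP), extend to an ultrafilter, and then use the gauge property to show $\ker\phi_{\U}\subseteq\Ik$ by observing that any $P\in\RP$ is strictly positive on $\II_k$ for $k$ large enough. The only cosmetic difference is that the paper does not insert the sets $\II_k$ into the filter base explicitly, but instead recovers $\II_k\in\U$ afterwards by writing $\II_k=\bigcap_{n\le k}Z(t_n-1)$ with $t_n-1\in\Ik_0\subseteq\Ik$.
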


\proof
First of all remark that the family $\F$ of the zero-sets 
$Z(S)=\{\ib\in\II\mid S(\ib)=0\}$ for 
$S\in\Ik$ has the finite intersection property, because $Z(S)\cap 
Z(T)=Z(S^{2}+T^{2})$. Let $\U$ be any ultrafilter containing $\F$.
We claim that  if $Z(S)\in\U$ then $S\in\Ik$.

If $S\notin\Ik$ we may assume \wlg\ that there exists $P\in\RP$ and 
$I\in\Ik$ such that $S=I+P$. Then  $Z(S)\cap Z(I)\incl Z(P)$, and 
$Z(P)\cap \II_{k}=\0$ for every sufficiently large $k$. But $\II_{k}$ 
is the intersection of the zero-sets of the polynomials $t_{n}-1$ for 
$n\le k$, so it belongs to $\U$, contradiction.
Hence $\Ik=\ker\phi_{\U}$, and the thesis follows.
\qed

\bigskip
In the case of natural equinumerosities, one can consider
only $0$-$1$ assignements, because only these annihilate the series
$$\sum_{n\in\N}(t_{n}^{2}-t_{n})^{2}.$$ Namely,
arrange the set of all eventually zero sequences of zeroes and ones
     in a sequence   
    $\LL=\la \xb_{F}\mid{F\in\N^{<\og}}\ra$, where $\xb_{F}(n)=1$ if and 
    only if $n\in F$. Then
    
\begin{lemma}\label{iso}
   The counting map $\Phi: \RR\to \Z^{\LL}$ such that $\Phi(S)=\la 
   S(\xb_{F})\mid F\in \N^{<\og}\ra$ is a surjective homomorphism of 
   partially ordered rings, whose kernel is the ideal $\Ik_{1}$ that
   consists of all those series 
   whose corresponding squarefree 
   series is $0$.
  
\end{lemma}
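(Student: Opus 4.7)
The plan is to verify that $\Phi$ is (a) a well-defined order-preserving ring homomorphism, (b) has kernel exactly $\Ik_{1}$, and (c) is surjective. The common engine for all three parts is the elementary observation that at the $0$-$1$ assignment $\xb_{F}$, the monomial $t^{\aa}$ evaluates to $1$ when $\mathrm{supp}(\aa)\incl F$ and to $0$ otherwise, so
\[
S(\xb_{F})\;=\;\sum_{\mathrm{supp}(\aa)\incl F}n_{\aa}\;=\;\sum_{G\incl F}c_{G},\qquad c_{G}:=\sum_{\mathrm{supp}(\aa)=G}n_{\aa}.
\]
The innermost sum is finite because each variable $t_{n}$ has bounded degree in $S$ and $F$ itself is finite, so $\Phi$ is well-defined; that $\Phi$ is a ring homomorphism is automatic from evaluating at a point; order preservation holds because $P\in\RP$ has non-negative coefficients, so every $P(\xb_{F})\ge 0$.

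For the kernel, $c_{G}$ is precisely the coefficient of $t_{G}$ in the associated squarefree series $S'$. The identity $S(\xb_{F})=\sum_{G\incl F}c_{G}$ is Möbius-invertible over the Boolean lattice of subsets of $F$, giving $c_{F}=\sum_{G\incl F}(-1)^{|F\7 G|}S(\xb_{G})$. Hence $\Phi(S)=0$ \Iff\ every $c_{F}=0$ \Iff\ $S'=0$ \Iff\ $S\in\Ik_{1}$.

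Surjectivity is the delicate step. Given $f\in\Z^{\LL}$, define $c_{F}=\sum_{G\incl F}(-1)^{|F\7 G|}f(G)$; by Möbius inversion, any $S\in\RR$ realising $\sum_{\mathrm{supp}(\aa)=F}n_{\aa}=c_{F}$ for every $F$ will satisfy $\Phi(S)=f$. I build such an $S$ explicitly while keeping every per-variable degree finite. Assign $f(\0)$ to the constant term. For each singleton $\{n\}$, distribute $c_{\{n\}}$ as $\pm 1$ coefficients across $t_{n},t_{n}^{2},\ldots,t_{n}^{|c_{\{n\}}|}$ (each $\pm 1$ satisfies the bound $b\cdot\tfrac{k!}{k!}=b$ as soon as $b\ge 1$). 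For each $F$ with $|F|\ge 2$, place the entire value $c_{F}$ on the single monomial $\prod_{n\in F\7\{\max F\}}t_{n}\cdot t_{\max F}^{M_{F}}$, where $M_{F}$ is chosen large enough that the corresponding multinomial $(M_{F}+1)(M_{F}+2)\cdots(M_{F}+|F|-1)$ exceeds $|c_{F}|$. With the global constant $b=\max(1,|f(\0)|)$ all coefficient bounds are met, and the degree of each $t_{n}$ in $S$ is the maximum of $|c_{\{n\}}|$, the integer $1$, and the finitely many values $M_{F}$ with $\max F=n$ (the latter set is indexed by the $2^{n}$ subsets of $\{0,1,\ldots,n\}$ having $n$ as maximum, hence finite). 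Therefore $S\in\RR$ and $\Phi(S)=f$.

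The main obstacle is precisely this last step: uniformly accommodating arbitrarily large Möbius coefficients $c_{F}$ while ensuring that every individual variable $t_{n}$ still has finite degree in the single series $S$. The key trick is concentrating the weight of $c_{F}$ on the monomial whose high exponent sits on $t_{\max F}$, which guarantees that only the finitely many $F\incl\{0,\ldots,n\}$ with $\max F=n$ can ever push the degree of $t_{n}$ beyond~$1$.
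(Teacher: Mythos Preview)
Your argument is correct and follows the same skeleton as the paper's proof: both compute $S(\xb_{F})=\sum_{G\incl F}c_{G}$ with $c_{G}=\sum_{\mathrm{supp}(\aa)=G}n_{\aa}$, identify the kernel via M\"obius inversion, and then build a preimage of an arbitrary $f\in\Z^{\LL}$ by first recovering the target support-sums $c_{F}$ and then realising them by a concrete series in $\RR$.

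The only place where you diverge from the paper is in the explicit construction for surjectivity. The paper simply bounds $|c_{F}|\le\sum_{E\incl F}|g(E)|$, sets $d_{n}=\sum_{E\incl\{0,\ldots,n\}}|g(E)|$, and asserts that a series of degree $\le d_{n}$ in each $t_{n}$ realising these sums exists, without spelling out the monomials. Your construction is more explicit and a little sharper: rather than spreading the weight $c_{F}$ over many monomials bounded by a uniform degree, you load it onto a \emph{single} monomial whose large exponent sits on $t_{\max F}$, exploiting the fact that the multinomial bound $\tfrac{k!}{\prod a_{i}!}$ grows without bound along that axis. This buys you a cleaner verification of both the coefficient bound and the finite-degree condition (only the $2^{n}$ sets $F$ with $\max F=n$ can raise the degree of $t_{n}$), at the cost of nothing. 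The two constructions are interchangeable for the purposes of the lemma.
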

    
\proof The map $\Phi$ is a homomorphism by definition, and clearly it 
maps $\RP$ into $\N^{\LL}$.

Given $S=\sum n_{\aa}t^{\aa}\in \RR$ and $F\in\N^{<\og}$, recall that 
$supp(\aa)=\{n\in\N\mid a_{n}\ne 0\}\ $ and\ $\ t_{F}=\prod_{n\in 
          F}t_{n}$. Put  
$n_{F}=\sum_{F=supp(\aa)}n_{\aa}$: then 
$S(\xb_{F})=\sum_{E\incl F}n_{E}$, and 
$S'=\sum_{F\in\N^{<\og}}n_{F}t_{F}$. By the inclusion-exclusion 
principle one has
$n_{F}=\sum_{E\incl F}(-1)^{|F\7 E|}S(\xb_{E})$.
Hence $S\in \ker\Phi$ if and only if $n_{F}=0$ for all 
$F\in\N^{<\og}$, or equivalently $S'=0$.
In particular, no non-zero squarefree series lies in the kernel of 
$\Phi$.

On the other hand, given $g\in\Z^{\LL}$ one has that any series 
$S=\sum n_{\aa}t^{\aa}$ 
such that $n_{F}=\sum_{E\incl F}(-1)^{|F\7 E|}g({E})$ satisfies 
$\Phi(S)=g$. So $|n_{F}|\le \sum_{E\incl F}|g({E})|$. Put 
$d_{n}=\sum_{E\incl\{0,\ldots,n\}}|g({E})|$: then we can find in $\RR$
a series $S$ of degree not exceeding $d_{n}$ in each variable $t_{n}$ 
that satisfies the condition above.

\qed

We are now ready to classify all natural equinumerosity relations.
Call \emph{gauge} a fine\footnote{
~The ultrafilter $\U$ is fine if all cones $C_{n}=\{F\mid n\in F\}$ 
belong to $\U$.}
ultrafilter $\U$ over $\N^{<\og}$ if every 
square\footnote{
~Recall that every element of $\N^{\LL}$ is a sum of squares.}
$f^{2}\in\N^{\LL}$ is equivalent modulo $\U$ to a 
function $g$ such that $n_{F}=\sum_{E\incl F}(-1)^{|F\7 E|}g({E})\ge 0$ for 
all $F\in\N^{<\og}$. (Remark that $\Phi(\sum n_{F}t_{F}) =g.)$

\begin{theorem}\label{rat}
    Let $\eq$ be a natural equinumerosity, and let $\Ik$ be
    the corresponding gauge ideal  of $\RR$.
Then $\LL$ is suitable for $\Ik$, and there exists a unique gauge
 ultrafilter $\U$ over $\N^{<\og}$ suitable for $\LL$ such that 
\begin{equation}
    X\eq Y\ \ \Iff\ \ \{F\in\N^{<\og} \mid S_{X}(\xb_{F})= 
           S_{Y}(\xb_{F})\}\in\U.\tag{***}
    \label{***}
\end{equation}
    Hence the counting map $\Phi$ induces an ordered semiring 
    isomorphism between the set  of numerosities $\Nk$ of $\eq$ and
     the ultrapower 
    $\N\ult{\N^{<\og}}{\U}$. 
    
Conversely, the condition \emph{(\ref{***})} defines a natural equinumerosity on 
$\WW$ if and only if the ultrafilter $\U$ is gauge.   
  
\end{theorem}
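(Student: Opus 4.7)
The plan is to exploit naturality to descend to the ring $\Z^{\LL}$. Since $\eq$ is natural we have $\Ik\supseteq\Ik_{1}=\ker\Phi$, so Lemma~\ref{iso} lets us identify $\RR/\Ik_{1}$ with $\Z^{\LL}$; writing $\Jk$ for the image of $\Ik$ under $\Phi$, we obtain a prime ideal of $\Z^{\LL}$ disjoint from $\Phi(\RP)$. To build the ultrafilter $\U$, I would consider the family $\F=\{Z(f)\mid f\in\Jk\}$ of zero-sets, where $Z(f)=\{F\in\N^{<\og}\mid f(F)=0\}$. It has the finite intersection property because $Z(f)\cap Z(g)=Z(f^{2}+g^{2})$ and $\Jk$ is closed under squares and sums, and moreover each $\LL_{k}$ lies in $\F$: indeed $1-t_{0}t_{1}\cdots t_{k}\in\Ik_{0}\subseteq\Ik$ maps under $\Phi$ to $\chi_{\LL_{k}^{c}}$, so $Z(\chi_{\LL_{k}^{c}})=\LL_{k}$ belongs to $\F$. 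Any ultrafilter $\U\supseteq\F$ is then automatically fine and suitable for $\LL$.

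The main obstacle will be to prove $\ker\phi_{\U}=\Ik$, which simultaneously shows that $\LL$ is suitable for $\Ik$ and yields condition~(\ref{***}), since $X\eq Y\Iff S_{X}-S_{Y}\in\Ik\Iff\phi_{\U}(S_{X}-S_{Y})=0$. The inclusion $\Ik\subseteq\ker\phi_{\U}$ is built into the construction of $\U$. For the reverse I would argue by contradiction: suppose $S\in\RR\setminus\Ik$ satisfies $\phi_{\U}(S)=0$, and apply the gauge property of $\Ik$ to obtain $P\in\RP$ with $S+P\in\Ik$ or $S-P\in\Ik$. Either way, since $\Ik\subseteq\ker\phi_{\U}$ and $\phi_{\U}(S)=0$, we deduce $\phi_{\U}(P)=0$, i.e.\ $Z(\Phi(P))\in\U$. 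But $P$ has some monomial $t^{\aa}$ with strictly positive coefficient, and $\Phi(P)$ is then strictly positive on the whole cone $\{F\supseteq\mathrm{supp}(\aa)\}$, a set lying in $\U$ by fineness---contradicting $Z(\Phi(P))\in\U$. Hence $\Ik=\ker\phi_{\U}$.

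Once $\ker\phi_{\U}=\Ik$ is established, the remaining claims fall into place. Uniqueness of $\U$ follows from the identification $A\in\U\Iff\chi_{A}\notin\Jk$, a consequence of the idempotence of $\chi_{A}$ together with the fact that $\Z^{\LL}/\Jk$ is an ordered integral domain, so that $[\chi_{A}]\in\{0,1\}$ is forced. To verify that $\U$ is gauge, given $f\in\Z^{\LL}$ I would lift $f^{2}$ to some $\tilde S^{2}\in\RR$ with $\Phi(\tilde S)=f$ and apply the gauge property of $\Ik$; the alternative $\tilde S^{2}+P\in\Ik$ is ruled out because $[f^{2}]_{\U}$ and $[\Phi(P)]_{\U}$ are both nonnegative in the ordered quotient yet would sum to zero, forcing $\phi_{\U}(P)=0$ and contradicting the cone-and-fineness argument above. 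Thus $\tilde S^{2}-P\in\Ik$, so $f^{2}\equiv\Phi(P)\pmod{\U}$, and the squarefree reduction of $P\in\RP$ furnishes the nonnegative coefficients $n_{F}$ required by the definition of a gauge ultrafilter. For the converse direction, setting $\Ik:=\ker\phi_{\U}$ yields a gauge ideal containing $\Ik_{1}$: the nontrivial gauge axiom is obtained by extending the squares-only gauge condition on $\U$ to arbitrary $f\in\N^{\LL}$ via the Lagrange four-squares identity (so every element of $\N^{\LL}$ is a sum of four squares), then sign-decomposing $\Phi(S)$ to determine whether $S\equiv Q$ or $S\equiv -Q\pmod{\Ik}$ for some $Q\in\RP$; the natural version of Theorem~\ref{ser} then produces the desired natural equinumerosity, and the ordered semiring isomorphism $\Nk\cong\N\ult{\N^{<\og}}{\U}$ is immediate from the construction.
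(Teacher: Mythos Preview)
Your overall strategy matches the paper's closely: pass from $\Ik$ to $\Jk=\Phi[\Ik]\subseteq\Z^{\LL}$ via Lemma~\ref{iso}, obtain an ultrafilter $\U$ from $\Jk$, and identify $\Ik$ with $\ker\phi_{\U}$. The cone argument you give for the inclusion $\ker\phi_{\U}\subseteq\Ik$ is correct and is a pleasant variant of what the paper does, and your treatment of uniqueness, of the gauge property of $\U$, and of the converse via Lagrange four squares are all in line with the paper's reasoning.

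There is, however, one genuine gap. When you form $\F=\{Z(f)\mid f\in\Jk\}$ and claim the finite intersection property, the identity $Z(f)\cap Z(g)=Z(f^{2}+g^{2})$ only shows that $\F$ is \emph{closed} under finite intersections; it does not show $\emptyset\notin\F$. Equivalently, you have not proved that every $f\in\Jk$ has a zero. Without this, you cannot extend $\F$ to an ultrafilter at all, and your later argument for $\ker\phi_{\U}=\Ik$ (which you say ``simultaneously shows that $\LL$ is suitable for $\Ik$'') presupposes the existence of $\U$ --- so the reasoning is circular at this point. Your cone argument cannot be used here either, since it applies to elements of $\RP$, whereas a nowhere-vanishing $f\in\Jk$ comes from some $S\in\Ik$, on which the gauge trichotomy gives no information.

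This is precisely the step where the paper inserts the Lagrange argument you reserved for the converse: if some $f\in\Jk$ had no zero, one multiplies by signs to get $g\in\Jk$ with $g\ge1$ everywhere, writes $g=1+h_{1}^{2}+h_{2}^{2}+h_{3}^{2}+h_{4}^{2}$, lifts to $1+S_{1}^{2}+\cdots+S_{4}^{2}\in\RR$, and observes that this element is strictly positive in the ordered ring $\RR/\Ik$, hence congruent modulo $\Ik$ to some $P\in\RP$; then $\Phi(P)\in\Jk$, contradicting $\Phi(\RP)\cap\Jk=\emptyset$. Once you insert this argument before building $\U$, your proof goes through and is essentially equivalent to the paper's, with the minor organisational difference that the paper reads off $\U$ directly from the prime ``zero-ideal'' $\Jk$, while you construct $\U$ in the style of Theorem~\ref{null} and then verify $\ker\phi_{\U}=\Ik$.
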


\proof
The ideal $\Ik$ includes the ideal $\Ik_{1}$, hence in particular all 
series whose corresponding squarefree is $0$, \ie\  the kernel of $\Phi$. 
Let $\Jk=\Phi[\Ik]$ be the ideal of $\Z^{\LL}$ corresponding to 
$\Ik$. Then $\RR/\Ik$ is isomorphic to $\Z^{\LL}/\Jk$, and every 
element of $\Jk$ has some  component equal to $0$.
 In fact, assume that $f\in\Jk$ has no zeroes, and
  multiply $f$ by a suitable sequence of $\pm 1$, so as to obtain a 
  sequence $g\in\Jk$ of positive integers. Then $g$ can be written as 
  $$g=1+h_{1}^{2}+h_{2}^{2}+h_{3}^{2}+h_{4}^{2}=
  \Phi(1+S_{1}^{2}+S_{2}^{2}+S_{3}^{2}+S_{4}^{2}).$$
  But the series $1+S_{1}^{2}+S_{2}^{2}+S_{3}^{2}+S_{4}^{2}$ is equivalent 
  modulo $\Ik$ to an element of $\RP$, and so its image $g$ under 
  $\Phi$ cannot belong to $\Jk$.
  So the counting set 
$\LL$ is suitable for $\Ik$, and $\Jk$ is 
 a prime zero-ideal that is determined by an ultrafilter $\U$
over $\LL$, which in turn is suitable for $\LL$, because $\II_{k}$ is 
the intersection of the zero-sets of the polynomials $t_{n}-1$ for 
$0\le n\le k$. It follows that  $\U$ is a fine ultrafilter, and that 
the condition (\ref{***}) holds.

The ultrafilter $\U$ is gauge, because, given $f^{2}=\Phi(S^{2})>0$, 
there exists $P\in\RP$ that is equivalent to $S^{2}$ modulo $\Ik$. 
Put $\Phi(P)=g$: then clearly $g\equiv f^{2}$ modulo $\U$, and 
satisfies the equalities $\sum_{E\incl F}(-1)^{|F\7 E|}g({E})\ge 0$ 
because $P\in\RP$.

Conversely, if $\U$ is a gauge ultrafilter, then 
(\ref{***}) defines an equinumerosity relation by 
Proposition \ref{real}, which is natural because the corresponding 
ideal of $\RR$ includes $\Ik_{1}$.

\qed

In particular we have
\begin{cor}\label{tnat}
The condition $($\emph{\ref{***}}$)$ provides
a biunique correspondence between natural equinumerosity 
relations $\eq$ on $\WW$ and gauge ultrafilters $\U$ on $\N^{<\og}$, 
in such a way that the following diagram commutes

\bigskip
\begin{center}
\begin{picture}(185,70)
   \put(0,0){\makebox(0,0){$\Nk$}}
   \put(90,0){\makebox(0,0){$\RR/\Ik$}}
   \put(180,0){\makebox(0,0){$\Z^{\LL}/\Phi[\Ik]$}}
   \put(215,0){\makebox(0,0){$\cong\Z\ult{\N^{<\og}}{\U}$}}
   \put(0,70){\makebox(0,0){$\WW$}}
   \put(90,70){\makebox(0,0){$\RR$}}
   \put(180,70){\makebox(0,0){$\Z^{\LL}$}}
   \put(45,6){\makebox(0,0){$j$}}
   \put(132,6){\makebox(0,0){$\cong$}}
   \put(45,76){\makebox(0,0){$\Sigma$}}
   \put(135,76){\makebox(0,0){$\Phi$}}
   \put(-8,35){\makebox(0,0){$\nk$}}
   \put(15,0){\vector(1,0){60}}
    \put(105,0){\vector(1,0){54}}
   \put(15,70){\vector(1,0){60}}
   \put(105,70){\vector(1,0){60}}
   \put(0,60){\vector(0,-1){50}}
   \put(90,60){\vector(0,-1){50}}
   \put(180,60){\vector(0,-1){50}}
\end{picture}
\end{center}
\bigskip
In particular all sets of natural numerosities can be taken to be 
sets of hypernatural numbers of the corresponding ultrapowers 
$\N\ult{\N^{<\og}}{\U}$.
\qed
\end{cor}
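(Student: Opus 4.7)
The plan is to assemble the corollary from two biunique correspondences already in hand. First, the natural version of Theorem \ref{ser} (stated in the remark after it) yields a bijection between natural equinumerosities $\eq$ on $\WW$ and gauge ideals $\Ik$ of $\RR$ with $\Ik\supseteq\Ik_1$, sending $\eq$ to the ideal generated by $\{S_X-S_Y\mid X\eq Y\}\cup\{t_0-1\}$. Second, Theorem \ref{rat} yields a bijection between such ideals $\Ik$ and gauge ultrafilters $\U$ on $\N^{<\og}$, characterized by the condition that $\Phi[\Ik]$ be precisely the prime zero-ideal of $\U$ in $\Z^{\LL}$, equivalently $\Ik=\ker\phi_{\U}$. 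Composing these two bijections produces the desired biunique correspondence between natural equinumerosities and gauge ultrafilters, with the defining relation (\ref{***}) inherited directly from Theorem \ref{rat}.

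Next I would verify that the diagram commutes square by square. The left square, with corners $\WW,\RR,\Nk,\RR/\Ik$, commutes by the diagram established in Theorem \ref{ser}: by construction of the embedding $j$ one has $j\circ\nk=\pi\circ\Sg$. For the right square, Lemma \ref{iso} together with the proof of Theorem \ref{rat} shows that $\Phi$ descends to a ring isomorphism $\bar{\Phi}\colon \RR/\Ik\to\Z^{\LL}/\Phi[\Ik]$, and the canonical projection $\Z^{\LL}\to\Z\ult{\N^{<\og}}{\U}$ further identifies $\Z^{\LL}/\Phi[\Ik]$ with the ultrapower, since $\Phi[\Ik]$ is exactly the kernel of that ultrapower projection. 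Both identifications preserve the partial order, so the entire diagram commutes.

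Finally, for the claim that natural numerosities may be taken to be hypernatural numbers, I would observe that for every $X\in\WW$ and $F\in\N^{<\og}$ one has
\[
S_X(\xb_F)=|\{x\in X\mid \mathrm{supp}(x)\subseteq F\}|\in\N,
\]
a finite non-negative integer because $X$ is finitary. Hence the composite of the horizontal maps in the bottom row sends $\nk(X)$ to the class of the sequence $\la S_X(\xb_F)\ra_{F\in\N^{<\og}}\in\N^{\LL}$ modulo $\U$, which is a hypernatural number of the ultrapower $\N\ult{\N^{<\og}}{\U}$. Thus $\Nk$ embeds as a subsemiring of the hypernatural cone, as claimed.

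The main obstacle here is essentially bookkeeping, since the corollary is a packaging of Theorem \ref{rat} with the natural case of Theorem \ref{ser}, not a substantially new claim. The one point deserving explicit attention is verifying that the identification $\Z^{\LL}/\Phi[\Ik]\cong\Z\ult{\N^{<\og}}{\U}$ restricts on the image of $\Nk$ to land in the hypernatural cone $\N\ult{\N^{<\og}}{\U}$; this follows at once from the non-negativity and integrality of the counts $S_X(\xb_F)$ displayed above.
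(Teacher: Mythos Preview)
Your proposal is correct and matches the paper's intent: the corollary is stated with only a \qed, meaning it is regarded as immediate from Theorem \ref{rat} together with the natural case of Theorem \ref{ser}, and your write-up is precisely a careful unpacking of that immediacy. One small remark: in your final paragraph you conclude only that $\Nk$ \emph{embeds} into $\N\ult{\N^{<\og}}{\U}$, whereas Theorem \ref{rat} already asserts (and Lemma \ref{iso}'s surjectivity of $\Phi$ yields) that the map is onto, so $\Nk$ is isomorphic to the full hypernatural cone; this is consistent with the corollary's wording but you may as well state the stronger conclusion.
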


\smallskip
\begin{remark}
    Put $F_{k}=\{0,\ldots,k\}$. In this context, the 
    \emph{asymptotic numerosities} defined 
    in \cite{QSU} can be characterized as the restrictions to $\WW_{0}$ 
    of those natural equinumerosities for which 
    $\II=\{\xb_{F_{k}}\mid k\in\N\}$ is a suitable counting set.
\end{remark}

\section{Final remarks and open questions}\label{froq}

It is interesting to remark that the natural numerosities satisfy a 
``Finite Approximation Principle'' analogous to that considered in 
\cite{bdf}. 
Let $\nk$ be a natural numerosity function, and for $X\in\WW$ and $F\in\N^{<\og}$ put 
$X_{F}=X\cap\bigcup_{n\in\N}F^{n}$, so that $S_{X}(\xb_{F})=|X_{F}|$.
Then

\begin{itemize}     
         \item[(\FAP)] ${~~~~}$ \emph{If $|X_{F}|\le |Y_{F}|$ for all $F\in\N^{<\og}$, 
	 then $\nk(X)\le\nk( Y)$.}
    \end{itemize}
    
    In fact, if $\U$ is the gauge ultrafilter corresponding to the 
    numerosity $\nk$, we have
    $$ 
	 \nk(X)\le\nk(Y)\ \ \Iff \ \ \{F\in\N^{<\og}\mid |X_{F}|\le 
	 |Y_{F}|\}\in\U. $$ 
	 
This fact suggests a ``Cantorian'' characterization of natural 
equinumerosity by means of a class of particular bijections, similar 
to the one obtained in \cite{QSU} for asymptotic equinumerosity. Call 
$\U$-\emph{congruence between $X$ and $Y$} a $1$-to-$1$ map 
$\tau:X\to Y$ such that
$\{F\in\N^{<\og}\mid \tau[X_{F}]= Y_{F}\}\in\U.$
Then we have
\begin{remark}\label{rat}
    Let $\eq$ be a natural equinumerosity, and let $\U$ be the 
    corresponding gauge
 ultrafilter  over $\N^{<\og}$. Then, by definition,
 \begin{center}
     \emph{if there exists a $\U$-congruence between 
         $X$ and $Y$, then $ X\eq Y$.}
 \end{center}  
The reverse implication seems difficult to prove in general. A 
simple inductive definition of $\tau$ can be given whenever the 
ultrafilter $\U$ contains a \emph{chain}. But in this case the 
equinumerosity becomes asymptotic after an appropriate reordering of 
$\N$.    
\end{remark}

When the ultrafilter is Ramsey the situation is much simpler, namely

\begin{cor}\label{selre}
A fine Ramsey ultrafilter $\U$ on $\N^{<\og}$ provides a natural equinumerosity 
relation on $\WW$.
The corresponding set of numerosities $\N\ult{\N^{<\og}}{\U}$ is isomorphic to the
ultrapower $\N\ult{\N}{\sg\U}$, where $\sg:\N^{<\og}\to\N$ is defined 
by $\sg(F)=|F|$.
This equinumerosity is characterized by the class of the 
$\U$-congruences, and becomes
asymptotic after a suitable reordering of $\N$.
\end{cor}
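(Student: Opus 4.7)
The plan is to reduce all four assertions to the single structural fact that any fine Ramsey ultrafilter $\U$ on $\N^{<\og}$ concentrates on an $\subseteq$-chain; once such a chain is in $\U$, claim (a) follows via Theorem~\ref{rat} from a direct verification of \ZP\ for the relation (***), while claims (b), (c), (d) are essentially bookkeeping using the chain together with the Remark preceding the Corollary.

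I would first prove the chain lemma. Applying the Ramsey property of $\U$ to $\sg$ yields $A\in\U$ on which $\sg$ is one-to-one. Colouring $[A]^{2}$ by ``$F$ and $G$ are $\subseteq$-comparable'' versus ``incomparable'' and applying Ramsey again produces a monochromatic $B\in\U$. In the comparable case $B$ is an infinite $\subseteq$-chain (ordered by size, since $\sg$ is one-to-one on $A$). The incomparable case contradicts fineness: fixing any $F_{0}\in B$, for every $F\in B$ with $|F|>|F_{0}|$ incomparability forces $F_{0}\not\incl F$, so
\[B\cap\{F\in\N^{<\og}:|F|>|F_{0}|\}\ \incl\ \bigcup_{x\in F_{0}}C_{x}^{c};\]
the left-hand side belongs to $\U$, while the right-hand side, being a finite union of complements of the cones $C_{x}\in\U$, does not. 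Using the $P$-point property of the Ramsey ultrafilter $\U$ on the decreasing sequence $\{B\cap C_{0}\cap\cdots\cap C_{n}\}_{n}$, one may moreover shrink $B$ so that $\bigcup B=\N$; write the resulting chain as $B=\{F_{0}\pincl F_{1}\pincl\cdots\}\in\U$, with $n_{i}=|F_{i}|$ strictly increasing.

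Given $B$, I would verify that (***) defines a natural equinumerosity: \AP, \TP, and \NP\ hold pointwise in $F$; \UP\ holds on the cones $C_{n}\in\U$; and \PP\ holds on multipliable pairs since $S_{X\* Y}=S_{X}\cdot S_{Y}$ pointwise. The substantive axiom is \ZP. If $X\not\eq Y$ and, say, $\{F:|X_{F}|<|Y_{F}|\}\in\U$, let $\V$ denote the trace of $\U$ on $B$ and apply Ramsey to $i\mapsto|X_{F_{i}}|-|Y_{F_{i}}|$ to reach a $\V$-set $W$ on which this integer-valued function is monotone (since it is bounded above by $-1$, the non-decreasing alternative forces it to be eventually constant on $W$). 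Enumerating $X$ and $Y$ in order of $\rho(z)=\min\{i:supp(z)\incl F_{i}\}$, a layer-by-layer selection produces $X'\incl Y$ with $|X'_{F_{i}}|=|X_{F_{i}}|$ for $i\in W$ and $X'\ne Y$ (strict inequality at the initial layer leaves some element of $Y$ outside $X'$), whence $X\prec Y$. By Theorem~\ref{rat} this establishes (a) and shows $\U$ is gauge. For (b), the map $[g]_{\sg\U}\mapsto[g\circ\sg]_{\U}$ is an injective ordered-semiring homomorphism, surjective because on the set $A\in\U$ where $\sg$ is one-to-one every $f\colon\N^{<\og}\to\N$ factors through $\sg$. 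For (c), the same $\rho$-order enumerations of $X\eq Y$, together with $|X_{F_{i}}|=|Y_{F_{i}}|$ on a $\V$-set, yield the $\U$-congruence $x_{k}\mapsto y_{k}$; the converse implication is the content of the Remark preceding the Corollary. For (d), re-indexing $\N$ as $a_{0},a_{1},\ldots$ in the order in which elements enter the chain makes each $F_{i}$ an initial segment $\{a_{0},\ldots,a_{n_{i}-1}\}$; via $n_{i}\mapsto i$ the trace $\V$ is transported to $\sg\U$ viewed as an ultrafilter on $\N$, and (***) becomes the asymptotic equinumerosity of \cite{QSU}.

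The main obstacle is the chain lemma, and specifically the exclusion of the antichain alternative by means of fineness; once the chain is in hand, everything else reduces to pointwise or layer-by-layer constructions on it.
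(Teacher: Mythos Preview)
Your argument is correct and covers all four assertions, but your route to part~(a) is genuinely different from the paper's. The paper does \emph{not} verify \ZP\ for (***) directly; instead it shows that $\U$ is gauge by an algebraic construction: given any non-negative $f:\N^{<\og}\to\Z$, it passes (via Ramsey) to a subchain $H_{0}\pincl H_{1}\pincl\cdots$ in $\U$ on which $f$ is non-decreasing, and then writes down the telescoping positive series $S=f(H_{0})t_{H_{0}}+\sum_{n}(f(H_{n+1})-f(H_{n}))t_{H_{n+1}}\in\RP$, which satisfies $\Phi(S)\equiv f\pmod{\U}$. This verifies the gauge condition immediately, and Theorem~\ref{rat} then yields the equinumerosity. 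Your approach instead builds the \ZP-witness $X'\pincl Y$ by a layer-by-layer selection along the chain (using monotonicity of $|X_{F_{i}}|-|Y_{F_{i}}|$), and invokes the \emph{converse} direction of Theorem~\ref{rat} to deduce that $\U$ is gauge. Both are valid; the paper's argument is shorter and exploits the series machinery already in place, while yours is more self-contained combinatorially and makes the trichotomy visible at the level of sets.

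Two minor remarks. First, your use of the $P$-point property to arrange $\bigcup B=\N$ is unnecessary: since $\U$ is fine and $B\in\U$, every cone $C_{n}$ meets $B$, so $\bigcup B=\N$ automatically. Second, your treatment of the antichain exclusion is more explicit than the paper's (which simply asserts ``$H$ is a chain, because $\U$ is fine''); this is a virtue, not a defect. For parts (b), (c), (d) your arguments are essentially the same as the paper's, modulo phrasing: the paper glues layer bijections $\tau_{n+1}:X_{H_{n+1}}\setminus X_{H_{n}}\to Y_{H_{n+1}}\setminus Y_{H_{n}}$ where you use a $\rho$-order enumeration, and the paper assumes the chain is ``complete'' ($\sg$ onto) where you track the sizes $n_{i}$ explicitly.
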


\proof
Consider the coloring of $[\N^{<\og}]^{2}$ given by $c(\{F,G\})=0$ if $F$ 
is comparable with $G$, $c(\{F,G\})=1$ otherwise. The ultrafilter 
$\U$ being Ramsey, it contains a homogeneous subset $H$, which is a 
chain, because $\U$ is fine.

In order to prove the first assertion, we show that $\U$ is gauge. 
Remark that, $\U$ being Ramsey,
any non-negative 
function $f:\N^{<\og}\to\Z$ is nondecreasing on some  subchain 
$H_{0}\pincl H_{1}\pincl\ldots\pincl H_{n}\pincl\ldots$ of $H$ that 
belongs to $\U$.  
Define the series 
$$S=f(H_{0})t_{H_{0}}+ 
\sum_{n\in\N}(f(H_{n+1})-f(H_{n}))t_{H_{n+1}}\in\RP:$$
then $\Phi(S)$ is equivalent to $f$ modulo $\U$, and so $\U$ is gauge.

Assume \wlg\ that $H$ is complete, \ie\ that the map 
$\sg$ restricted to $H$ is onto $\N$. Then $\sg$ induces an isomorphism between the 
ultrapowers $\Z\ult{\N^{<\og}}{\U}$ and $\Z\ult{\N}{\sg\U}$, and the 
second assertion is proved.

Given equinumerous sets $X$ and $Y$, let 
$H_{0}\pincl H_{1}\pincl\ldots\pincl H_{n}\pincl\ldots$ be a subchain of $H$
such that $|X_{H_{n}}|=|Y_{H_{n}}|$ for all $n\in\N$. Define 
$\tau:X\to Y$ by glueing together disjoint bijections 
$\tau_{0}:X_{H_{0}}\to Y_{H_{0}}$ and
$\tau_{n+1}:X_{H_{n+1}}\7 X_{H_{n}}\to Y_{H_{n+1}}\7 Y_{H_{n}}$ for 
all $n\in\N$.  Then $\tau$ is a $\U$-congruence between $X$ and $Y$.

Finally, any complete chain $H$ in $\U$ provides a reordering of $\N$ 
such that $H_{k}$ becomes $F_{k}$: hence,
with respect to this reordering, the equinumerosity is asymptotic.

\qed

\smallskip
Remark that  the ultrafilter $\sg\U$ on $\N$ defined above is Ramsey,
whenever the gauge ultrafilter $\U$ contains a (complete) 
chain,
because every non-negative function is non decreasing modulo $\sg\U$.
So natural equinumerosities exist under mild set theoretic 
hypotheses, namely those that provide Ramsey ultrafilters over $\N$, 
and the corresponding numerosities are, up to isomorphism, the 
\emph{asymptotic numerosities} of \cite{QSU}.
The question as to whether there exist non-Ramsey gauge ultrafilters 
is still open, and with it the most interesting question of the 
existence of natural equinumerosities in \zfc\ alone. However we 
conjecture that \emph{only P-point ultrafilters can be gauge}, and so 
the question would be solved in the negative. Combined with the 
``geometric''  intuition that the size of the diagonal might be 
different from that of the side, this conjecture is the reason why we 
did not include the Natural Transformation Principle \NP\ in the 
definition of equinumerosity.

 On the other hand, the existence of gauge ideals of the ring $\RR$ 
 seems to  be weaker 
than that of gauge ultrafilters, and so the existence of 
(non-natural) equinumerosities might be provable in \zfc.
However also this question is  
open up to now.


\bigskip
\bigskip

\bibliographystyle{amsplain}


\end{document}